%
%
%
%
\documentclass{m2an}

\usepackage{amsmath,amsthm,amssymb,bm}
\usepackage[mathscr]{eucal}
\usepackage{xcolor,comment,algorithm,algorithmic,graphicx}

\newtheorem{thm}{Theorem}[section]

\newtheorem{prop}[thm]{Proposition}

\theoremstyle{definition}

\newtheorem{rem}[thm]{Remark}

\newtheorem{assum}{Assumption}

\DeclareMathOperator*{\argmax}{arg\,max}

\newcommand{\Pad}{{\mathscr P_\mathsf{ad}}}
\newcommand{\Pscr}{{\mathscr P}}
\newcommand{\Uscr}{{\mathscr U}}
\newcommand{\Uad}{{\mathscr U_\mathsf{ad}}}

\newcommand{\Yscr}{{\mathscr Y}}
\newcommand{\Qscr}{{\mathscr Q}}
\newcommand{\Vc}{{V_\circ}}
\newcommand{\Cscrc}{{C_\circ(\overline\Omega)}}

\newcommand{\Vh}{{V^h}}
\newcommand{\Vch}{{V_\circ^h}}
\newcommand{\yrm}{{\mathrm y}}
\newcommand{\qrm}{{\mathrm q}}
\newcommand{\wrm}{{\mathrm w}}
\newcommand{\Wscr}{{\mathscr W}}

\newcommand{\Vl}{{V^{\ell}}} 
\newcommand{\Vcl}{{V_\circ^{\ell}}} 
\newcommand{\elly}{{\ell_{\mathrm y}}} 
\newcommand{\ellq}{{\ell_{\mathrm q}}} 
\newcommand{\Psiy}{{\Psi_{\mathrm y}}} 
\newcommand{\Psiq}{{\Psi_{\mathrm q}}} 

\newcommand{\frm}{{\mathrm f}}
\newcommand{\ellf}{{\ell_{\mathrm f}}} 
\newcommand{\Psif}{{\Psi_{\frm}}} 
\newcommand{\Prm}{{\mathrm P}}
\newcommand{\PrmT}{{\mathrm P^\top}}
\newcommand{\tildePrmT}{{\tilde{\mathrm P}^\top}}
\newcommand{\hatPsiy}{{\hat\Psi_{\mathrm y}}}
\newcommand{\hatPsiq}{{\hat\Psi_{\mathrm q}}}

\newcommand{\Psiyl}{{\Psi_{\mathrm{y}}^\ell}}
\newcommand{\Psiql}{{\Psi_{\mathrm{q}}^\ell}}
\newcommand{\Psiym}{{\Psi_{\mathrm{y}}^m}}

\newcommand{\Vm}{{V^{m}}}
\newcommand{\Vcm}{{V_\circ^{m}}}

\newcommand{\Srm}{{\mathrm S}}
\newcommand{\Syrm}{{\mathrm S_\yrm}}
\newcommand{\Sqrm}{{\mathrm S_\qrm}}
\newcommand{\Ptr}{{\mathscr P_\mathsf{train}}}
\newcommand{\Pte}{{\mathscr P_\mathsf{test}}}

\newcommand{\bmu}{{\bm\mu}}
\newcommand{\bmua}{{\bm\mu_\mathsf a}}
\newcommand{\muai}{{\mu_{\mathsf a,i}}}

\newcommand{\bmub}{{\bm\mu_\mathsf b}}
\newcommand{\mubi}{{\mu_{\mathsf b,i}}}

\newcommand{\ua}{{u_\mathsf a}}
\newcommand{\ub}{{u_\mathsf b}}
\newcommand{\ya}{{y_\mathsf a}}
\newcommand{\ka}{{\kappa_\mathsf a}}

\begin{document}
\title{Adaptive parameter optimization for an elliptic-parabolic system using the reduced-basis method with hierarchical a-posteriori error analysis}

\thanks{The authors got partial financial support within the COMET K2 Competence Centers for Excellent Technologies from the Austrian Federal Ministry for Climate Action, the Austrian Federal Ministry for Digital and Economic Affairs, the Province of Styria (Dept. 12) and the Styrian Business Promotion Agency.}

\thanks{The authors furthermore acknowledge funding by the Deutsche Forschungsgemeinschaft for the project Localized Reduced Basis Methods for PDE-constrained Parameter Optimization under contract VO 1658/6-1.
}

\author{Behzad Azmi}\address{University of Konstanz, Department of Mathematics and Statistics, Konstanz, 78457 Germany, \\ \email{behzad.azmi@uni-konstanz.de\ \&\ andrea.petrocchi@uni-konstanz.de\ \&\ stefan.volkwein@uni-konstanz.de}}
\author{Andrea Petrocchi}\sameaddress{1}
\author{Stefan Volkwein}\sameaddress{1}
\date{\today}
\begin{abstract}
    In this paper the authors study a non-linear elliptic-parabolic system, which is motivated by mathematical models for lithium-ion batteries. One state satisfies a parabolic reaction diffusion equation and the other one an elliptic equation. The goal is to determine several scalar parameters in the coupled model in an optimal manner by utilizing a reliable reduced-order approach based on the reduced basis (RB) method. However, the states are coupled through a strongly non-linear function, and this makes the evaluation of online-efficient error estimates difficult. First the well-posedness of the system is proved. Then a Galerkin finite element and RB discretization are described for the coupled system. To certify the RB scheme hierarchical a-posteriori error estimators are utilized in an adaptive trust-region optimization method. Numerical experiments illustrate good approximation properties and efficiencies by using only a relatively small number of reduced basis functions.
\end{abstract}

\subjclass{65K10, 65M20, 49M41}                                                                                             \keywords{elliptic and parabolic partial differential equations, parameter estimation, reduced-order modelling, hierarchical a-posteriori error estimate, trust-region methods.}
\maketitle
\section{Introduction}
\label{Sec:1}

The modelling of lithium-ion batteries has received an increasing amount of attention in the recent past. Several companies worldwide are developing such batteries for consumer electronic applications, in particular, for electric-vehicle applications. To achieve the performance and lifetime demands in this area, exact mathematical models of the battery are required. Moreover, the multiple evaluations of the battery model for different parameter settings involve a large amount of time and experimental effort. Here, the derivation of reliable mathematical models and their efficient numerical realization are very important issues in order to reduce  both computational time and cost in the improvement of the performance of batteries.

Mathematical models for lithium-ion batteries describe the evolution of lithium-ion concentration in the different regions of a battery and the electric potentials in the so-called electrolyte and solid phases. We refer to \cite{PVMI10}, where the transport processes are described by a coupled system of partial differential equations (PDEs). The physical and chemical details can be found, e.g., in \cite{LZ11,LZI11}. The equation system models a physico-chemical micro-heterogeneous battery model.  A macro-homogeneous approach is developed in the pioneering work \cite{New73} and forms the basis for further investigations; cf. \cite{DFN93,FDN94,GWDW02,Ram16,SW06}, for instance. Well-posedness is studied, e.g., in \cite{Seg13,WXZ06}.

The goal of the present work is to make a first step in order to extend the theoretical and numerical results obtained in \cite{PSV22} to a more realistic battery model. For that purpose, we consider the following still simplified coupled system of parametrized elliptic-parabolic equations
\begin{subequations}
    \label{0.0:coupled_system_together}
    \begin{align}
        \label{0.0:coupled_system_together_parabolic}
        y_t (t,x) - \mu_1\big(\kappa_1 (x) y_x (t,x)\big)_x - \mu_2 f(y(t,x), q(t,x)) &= 0&&\text{f.a.a. }(t,x)\in Q_T,\\
        \label{0.0:coupled_system_together_elliptic}
        -\mu_3 ( \kappa_2 (x) q_x (t,x) )_x + \mu_4 f(y(t,x), q(t,x))&= 0&&\text{f.a.a. }(t,x)\in Q_T
    \end{align}
\end{subequations}
together with an initial condition for $y$, homogeneous Neumann boundary conditions for $y$ and inhomogeneous mixed boundary conditions for $q$. Throughout we write `f.a.a.' for `for almost all'. In \eqref{0.0:coupled_system_together} the non-linear mapping $f:\mathbb R_+\times\mathbb R\to\mathbb R$ has the specific form
\begin{align*}
    f(\yrm,\qrm):= \sqrt{\mathrm y} \sinh(\mathrm q)\quad\text{for }\yrm\in\mathbb R_\ge:=\{s\in\mathbb R\,\vert\,s\ge0\}\text{ and }\qrm \in \mathbb R,
\end{align*}
which is motivated by mathematical models for lithium-ion batteries. There are parameters $\bmu=(\mu_i)_{1\le i\le 4}\in\mathbb R^4$ in the PDE system which are assumed to be unknown a-priori or which cannot be determined experimentally. Hence, efficient numerical algorithms are needed to estimate these parameters. For this purpose, a parameter identification problem is formulated as a non-linear least squares problem. To speed-up the optimization method a reduced-basis (RB) scheme is used; see,  e.g., \cite{HRS16,QMN16} and \cite{LV13,LXC01,ORS16,VW13} for battery models. In particular, we apply an adaptive trust-region method that does not need any a-priori computation of an RB subspace on an offline phase, but builds the RB approximation online based on computable a-posteriori error estimates; see \cite{BKMOSV22,BMV22,KMOSV21,PSV22,QGVW17}. Since there are no efficient error bounds available for the non-linear system \eqref{0.0:coupled_system_together}, we utilize hierarchical a-posteriori estimates based on \cite{HORU19}.

Summarizing, the new main contributions of the present paper are: (i) proof of existence of a unique weak solution to \eqref{0.0:coupled_system_together}, (ii) extension of the hierarchical error estimation introduced in \cite{HORU19} to a parabolic and non-linear coupled system, (iii) development of a trust-region method for parameter optimization by combining the algorithms in \cite{BKMOSV22,KMOSV21,QGVW17} and the hierarchical error analysis.

The paper is organized as follows: In Section~\ref{Sec:2} we study the well-posedness and local existence in time of \eqref{0.0:coupled_system_together}. The full- and reduced-order discretization are explained in Section~\ref{Sec:3}. The hierarchical a-posteriori error estimator is derived and tested numerically in Section~\ref{Sec:4}. The parameter estimation is considered in Section~\ref{Sec:5}, where also numerical experiments are presented. Secton~\ref{Sec:6} is devoted to a conclusion. 
In Appendix \ref{Sec:A} the proofs of Section~\ref{Sec:2} are given.

\section{The coupled elliptic-parabolic PDE}
\label{Sec:2}

In this section, we introduce our coupled elliptic-parabolic problem and prove that a unique weak solution exists locally in time.

\subsection{The weak formulation}
\label{Sec:2.1}

Let $T>0$ be the (finite) time horizon, $\Omega := (0,L)\subset\mathbb R$ be a space interval and let $Q_T:= (0,T)\times(0,L)$. We consider the following parameter-dependent parabolic-elliptic coupled system for the two state variables $y,q: Q_T \to \mathbb{R}$
\begin{subequations}
\label{2.0:coupled_system_together}
\begin{align}
    \label{2.0:coupled_system_together_parabolic}
    y_t (t,x) - \mu_1\big(\kappa_1 (x) y_x (t,x)\big)_x - \mu_2 f(y(t,x), q(t,x)) &= 0&&\text{f.a.a. }(t,x)\in Q_T,\\
    \label{2.0:coupled_system_together_elliptic}
    -\mu_3 ( \kappa_2 (x) q_x (t,x) )_x + \mu_4 f(y(t,x), q(t,x))&= 0&&\text{f.a.a. }(t,x)\in Q_T
\end{align}
with homogeneous Neumann boundary conditions
\begin{equation}
    \label{2.0:coupled_system_together_parabolic_BC} 
    y_x(t,0) = y_x(t,L) = 0\quad \text{f.a.a. }t\in(0,T),
\end{equation}
inhomogeneous Dirichlet-Neumann mixed boundary conditions
\begin{equation}
    \label{2.0:coupled_system_together_elliptic_BC} 
    q(t,0) = 0\text{ f.a.a. }t\in(0,T), \quad\mu_3 \kappa_2(L) q_x(t,L) = u(t)\text{ f.a.a. }t\in(0,T)
\end{equation}
and initial conditions
\begin{equation}
    \label{2.0:coupled_system_together_IC} 
    y(0,x) =y_\circ(x)\quad \text{f.a.a. }x\in\Omega.
\end{equation}
\end{subequations}
In the following, we will fix some assumptions for \eqref{2.0:coupled_system_together} that are necessary for our existence results.

\begin{assum}
\label{2.0:assumption_1}
	\hfill
	\begin{enumerate}
        \item [1)] For given parameter bounds $\bmua=(\muai)_{1\le i\le 4}$ and $\bmub=(\mubi)_{1\le i\le 4}$ satisfying $0<\bmua\le\bmub$ in $\Pscr=\mathbb R^4$ an admissible parameter vector $\bmu=(\mu_i)_{1\le i\le 4}$ lies in the nonempty, compact and convex set $\Pad=\{\bmu\in\Pscr:\,\bmua\le\bmu\le\bmub\text{ in }\Pscr\} \subset \Pscr$. Here, `$\le$' is understood component-wise.
        \item [2)] The initial condition $y_{\circ}$ belongs to $H^1(\Omega)$ and is positive on $\overline\Omega$, namely $y_\circ(x)\ge \ya$ for all $x\in\overline\Omega$ and a positive constant $\ya$.
        \item [3)] For $\Uscr:=L^2(0,T)$ the set of admissible inputs is $\Uad:=\{u\in\Uscr:\,\ua(t)\le u(t)\le\ub(t)\text{ f.a.a. }t\in[0,T]\}$ with $\ua,\ub\in L^\infty(0,T)$ satisfying $\ua\le\ub$ a.e. in $[0,T]$. In particular, $\Uad\subset L^\infty(0,T)$ holds, and we have $\|u\|_{L^\infty(0,T)}\le c_\Uscr$ for all $u\in\Uscr$ with $c_\Uscr=\max\{\|\ua\|_{L^\infty(0,T)},\|\ub\|_{L^\infty(0,T)}\}$.
        \item [4)] The functions $\kappa_1$, $\kappa_2$ belong to $C^{0,1}(\overline{\Omega})$ with $\kappa_1(x) \ge \ka$ and $\kappa_2(x) \ge \ka$ for all $x \in \overline{\Omega}$ and a positive constant $\ka$.
        \item [5)] The non-linearity is defined as $f(\mathrm y, \mathrm q) := \sqrt{\mathrm y} \sinh(\mathrm q)$ for $\mathrm y\in\mathbb R_{\ge}$ and $\mathrm q \in \mathbb R$.
        \end{enumerate}
\end{assum}

\begin{rem}
    \hfill
    \begin{enumerate}
        \item [1)] Recall that $H^1(\Omega) \hookrightarrow C(\overline\Omega)$ holds (cf. \cite{Eva10}). Thus, Assumption~\ref{2.0:assumption_1}-2) implies $y_\circ\in C(\overline\Omega)$.
        \item [2)] Note that $f$ is not differentiable at $\mathrm y = 0$. In our application, the state $y$ stands for the concentration of lithium-ions in a battery cell. Thus, the situation $\mathrm y=y(t,x)\le0$ does not have any physical meaning. This non-negativity of $\mathrm y$ is needed to evaluate $f(\mathrm y,\mathrm q)$.\hfill$\Diamond$
    \end{enumerate}
\end{rem}

Let $H := L^2(\Omega)$ and $V := H^1(\Omega)$ endowed with the inner products
\begin{align*}
	{\langle\varphi,\phi\rangle}_H=\int_\Omega\varphi(x)\phi(x)\,\mathrm dx\text{ for }\varphi,\phi\in H,\quad{\langle\varphi,\phi\rangle}_V=\int_\Omega\varphi(x)\phi(x)+\varphi'(x)\phi'(x)\,\mathrm dx\text{ for }\varphi,\phi\in V,
\end{align*}
respectively, and the associated induced norms $\|\cdot\|_H=\langle\cdot\,,\cdot\rangle_H^{1/2}$, $\|\cdot\|_V=\langle\cdot\,,\cdot\rangle_V^{1/2}$. We define the Hilbert space
\begin{align*}
	\Vc:=\{\varphi\in V\,|\,\varphi(0)=0\}
 \end{align*}
supplied by the inner product
\begin{align*}
	{\langle\varphi,\phi\rangle}_\Vc:=\int_\Omega\varphi'(x)\phi'(x)\,\mathrm dx\quad\text{for }\varphi,\phi\in\Vc.
\end{align*}
Furthermore, let $\Cscrc= \{\varphi\in C(\overline\Omega):\,\varphi(0)= 0\}$ supplied with the $C(\overline\Omega)$-norm, i.e.,  $\|\varphi\|_{C(\overline\Omega)}=\max_{x\in\overline\Omega}|\varphi(x)|$. Since $V \hookrightarrow C(\overline\Omega)$ holds, there exists an embedding constant $c_{\mathsf e}>0$ (only dependent on the interval $\Omega$) satisfying
\begin{align}
	\label{2.1:c_embedding}
	{\|\varphi\|}_{C(\overline\Omega)} \le c_{\mathsf e}\,{\|\varphi\|}_V\quad\text{for all }\varphi\in V
\end{align}
with $\|\varphi\|_{C(\overline\Omega)}=\max_{x\in\overline\Omega}|\varphi(x)|$. Furthermore, Poincaré's inequality holds on the space $\Vc$ (see \cite[Theorem~7.91]{Sal16}): there exists a constant $c_{\mathsf P} > 0$ such that
\begin{equation}
    \label{2.1:poincare_inequality}
    {\|\varphi\|}_H\le c_{\mathsf P}\,{\|\varphi\|}_\Vc \quad \text{for all }\varphi\in\Vc.
\end{equation}
From \eqref{2.1:c_embedding} and \eqref{2.1:poincare_inequality} we infer that
\begin{equation*}
    {\|\varphi\|}^2_{C(\overline\Omega)}\le c^2_{\mathsf e}\,{\|\varphi\|}^2_V=c^2_{\mathsf e}\left({\|\varphi\|}^2_H+{\|\varphi\|}^2_\Vc\right) \le c^2_{\mathsf e}\left(c_{\mathsf P}^2+1\right) {\|\varphi\|}^2_\Vc.
\end{equation*}
Hence, for $c_\mathsf{eP}=c_{\mathsf e}(c_{\mathsf P}^2+1)^{1/2}>0$ it holds that
\begin{equation}
    \label{2.1:embedding+poincare}
    {\|\varphi\|}_{C(\overline\Omega)}\le c_\mathsf{eP}\,{\|\varphi\|}_\Vc\quad\text{for all }\varphi\in\Vc.
\end{equation}

Next, we define the solution spaces
\begin{align*}
    \Yscr^T:=W(0,T;V,V') \cap C( \overline Q_T)\quad\text{ and }\quad\Qscr^T:=L^\infty(0,T;\Vc)
\end{align*}
endowed by their product topology, where $W(0,T;X,Y):=L^2(0,T;X)\cap H^1(0,T;Y)$; see, e.g., \cite{DL00} for more details. Moreover, for the function $x\mapsto\varphi(t,x)$ f.a.a. $t\in [0,T]$ with $\varphi\in L^2(0,T;V)$, we frequently use the notation $\varphi(t)$.

Throughout we write `a.e.' for `almost everywhere'. Now, the weak formulation of \eqref{2.0:coupled_system_together} is as follows: for $\bmu \in \Pad$ and $u\in\Uad$ find a solution pair $z=(y,q)\in\Yscr^T\times\Qscr^T$ such that
\begin{subequations}
    \label{2.1:weak_all}
    \begin{align}
        \label{2.1:weak_parabolic}
        \frac{\mathrm d}{\mathrm dt}\,{\langle y(t),\varphi^y\rangle}_H+a^1_\bmu (y(t),\varphi^y)+{\langle g^1_\bmu [z(t)],\varphi^y\rangle}_{V',V}&=0&&\text{for all }\varphi^y\in V\text{ and }t\in(0,T]\text{ a.e.},\\
        \label{2.1:weak_ic}
        y(0)&=y_\circ&&\text{in } H,\\
        \label{2.1:weak_elliptic}
        a^2_\bmu(q(t),\varphi^q)+{\langle g^2_\bmu[z(t)],\varphi^q\rangle}_{V_\circ',\Vc}&={\langle b(t) , \varphi^q\rangle}_{V_\circ',\Vc}&&\text{for all }\varphi^q\in\Vc\text{ and }t\in(0,T]\text{ a.e.,}
    \end{align}
\end{subequations}
where the bilinear forms are defined as
\begin{subequations}
    \label{2.1:bilinear_forms}
    \begin{align}
        a^1_\bmu(\varphi,\phi)&:=\mu_1\int_\Omega\kappa_1 (x)\varphi'(x)\phi'(x)\,\mathrm dx=: \mu_1 \, \hat{a}^1(\varphi,\phi)&&\text{for }\varphi,\phi\in V, \\
        a^2_\bmu(\varphi,\phi)&:=\mu_3 \int_\Omega \kappa_2(x)\varphi'(x)\phi'(x)\,\mathrm dx=: \mu_3 \, \hat{a}^2 (\varphi,\phi)&&\text{for }\varphi,\phi\in\Vc.
    \end{align}
\end{subequations}
The non-linear operators are given by
\begin{subequations}
    \label{2.1:nonlinear_operators}
    \begin{align}
    {\langle g^1_\bmu [z(t)],\varphi\rangle}_{V',V}&:=\mu_2 \int_\Omega (-f(y(t),q(t))) \varphi(x)\,\mathrm dx=:\mu_2 \, \hat g^1(z(t),\varphi)&&\text{for }\varphi\in V, \\
    {\langle g^2_\bmu [z(t)],\varphi\rangle}_{V_\circ',\Vc}&:=\mu_4\int_\Omega f(y(t),q(t)) \varphi(x)\,\mathrm dx\hspace{5mm}=:\mu_4\,\hat g^2(z(t),\varphi)&&\text{for }\varphi\in\Vc,
    \end{align}
\end{subequations}
Finally, the boundary condition appears in the linear operator
\begin{equation*}
    {\langle b(t),\varphi\rangle}_{\Vc',\Vc}=u(t) \varphi(L)\qquad\text{for }\varphi\in\Vc\text{ and }t\in(0,T]\text{ a.e.}
\end{equation*}

\subsection{Well-posedness of the state equation locally in time}
\label{Sec:2.2}

Suppose that $y\in\Yscr^T$ with $y>0$ in $Q_T$ is given. For every $u\in\Uad$ and f.a.a. $t \in [0,T]$ we consider the non-linear elliptic problem
\begin{equation}
    \label{2.2:elliptic_state} \tag{\textbf E}
    -\mu_3\left(\kappa_2 q_x(t)\right)_x+\mu_4f(y(t),q(t))= 0\text{ a.e. in } \Omega,\quad q(t,0)=0,\quad\mu_3\kappa_2(L)q_x(t,L)=u(t).
\end{equation}
A weak solution of \eqref{2.2:elliptic_state} satisfies f.a.a. $t\in[0,T]$
\begin{equation}
    \label{2.2:elliptic_state_weak}
    \tag{$\mathbf E_{\mathsf w}$}
    \mu_3 \int_\Omega \kappa_2(x) q_x(t,x) \varphi'(x) \,\mathrm dx + \mu_4\int_\Omega f(y(t,x),q(t,x))\varphi(x)\mathrm dx= u(t) \varphi(L)\quad\text{for all }\varphi\in\Vc.
\end{equation}
To prove the existence of a solution to \eqref{2.2:elliptic_state_weak} we make use of the Leray-Schauder fixed point theorem (cf., e.g. \cite[p.~189]{Fri64}). For that purpose, the following hypothesis is needed. 

\begin{assum}
    \label{2.2:assumption_Y_M}
    For a given $M>1$ let $y$ belong to the non-empty, closed, bounded, convex set $\Yscr^T_M$ defined as
    \begin{equation}
        \label{2.2:Y_M}
        \Yscr^T_M := \left\{ y\in C(\overline Q_T):\, \frac{1}{M} \le y(t,x) \le M \text{ for all }(t,x)\in\overline Q_T\right\}.
    \end{equation}
\end{assum}

The following result is proved in Appendix~\ref{Sec:A.1}.

\begin{thm}
    \label{2.2:thm_exist_C_est}
    Let Assumptions~\emph{\ref{2.0:assumption_1}} and \emph{\ref{2.2:assumption_Y_M}} hold. Then, for every $u\in\Uad$ there exists a unique solution $q\in\Qscr^T$ of \eqref{2.2:elliptic_state_weak} and a constant $c(M)>0$ (independent of $q$ or $u$ but dependent on $M$) such that 
    \begin{equation}
        \label{2.2:q_est_C}
        {\| q(t) \|}_{C(\overline\Omega)} \le c_\mathsf{eP}\,{\| q(t) \|}_{V_\circ}\le c(M)|u(t)| \le c(M) c_\Uscr\quad\text{for }t\in[0,T]\text{ a.e.,}
    \end{equation}
    where the constant $c_\mathsf{eP}$ has already been introduced in \eqref{2.1:embedding+poincare}.
\end{thm}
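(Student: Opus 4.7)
The plan is to fix $t\in[0,T]$ and produce $q(t)\in\Vc$ by a Leray--Schauder fixed-point argument, then verify that these pointwise solutions assemble into an element of $\Qscr^T=L^\infty(0,T;\Vc)$. Two structural features of the nonlinearity drive every estimate: $s\mapsto\sinh(s)$ is monotone increasing, so $(\sinh s_1-\sinh s_2)(s_1-s_2)\ge 0$ and in particular $s\sinh(s)\ge 0$; and $\sqrt{y(t,x)}\ge 1/\sqrt M>0$ by Assumption~\ref{2.2:assumption_Y_M}.

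\emph{Uniqueness and a priori estimate.} For two solutions $q_1,q_2\in\Vc$ of \eqref{2.2:elliptic_state_weak} with the same data, subtracting the equations and testing with $\varphi=q_1-q_2$ renders the diffusion and reaction contributions simultaneously non-negative, while the coercivity inequality $\mu_3\int_\Omega\kappa_2|(q_1-q_2)_x|^2\,\mathrm dx\ge\mu_{\mathsf{a},3}\ka\|q_1-q_2\|_\Vc^2$ forces $q_1=q_2$. Testing the equation itself with $\varphi=q(t)$, the reaction term remains non-negative and can be dropped, while the right-hand side is controlled via $|q(t,L)|\le\|q(t)\|_{C(\overline\Omega)}\le c_\mathsf{eP}\|q(t)\|_\Vc$ from \eqref{2.1:embedding+poincare}; this yields \eqref{2.2:q_est_C} with $c(M)=c_\mathsf{eP}/(\mu_{\mathsf{a},3}\ka)$ (the $M$-dependence permitted by the statement is not actually needed at this step).

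\emph{Existence via Leray--Schauder.} For $\lambda\in[0,1]$ define $T_\lambda:\Vc\to\Vc$ by letting $T_\lambda(\tilde q)$ be the unique Lax--Milgram solution of
\begin{equation*}
\mu_3\int_\Omega\kappa_2\,T_\lambda(\tilde q)_x\,\varphi'\,\mathrm dx=\lambda u(t)\varphi(L)-\lambda\mu_4\int_\Omega\sqrt{y(t)}\sinh(\tilde q)\,\varphi\,\mathrm dx\qquad\text{for all }\varphi\in\Vc.
\end{equation*}
Fixed points of $T_1$ solve \eqref{2.2:elliptic_state_weak}. Joint continuity in $(\tilde q,\lambda)$ is routine, and compactness at each $\lambda$ rests on the one-dimensional compact embedding $\Vc\hookrightarrow C(\overline\Omega)$: a bounded sequence in $\Vc$ has a uniformly convergent subsequence, the Nemytskii map $\tilde q\mapsto\sqrt{y(t)}\sinh(\tilde q)$ is continuous from $C(\overline\Omega)$ to $L^2(\Omega)\hookrightarrow\Vc'$, and the bounded linear solution operator returns a strongly convergent image in $\Vc$. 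The a priori bound on fixed points $q_\lambda=T_\lambda(q_\lambda)$ coincides with the one derived in the previous step, uniformly in $\lambda\in[0,1]$, so Leray--Schauder supplies a fixed point of $T_1$.

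\emph{Measurability in time and main obstacle.} To upgrade the pointwise construction to $q\in L^\infty(0,T;\Vc)$ I would verify strong measurability of $t\mapsto q(t)$ via continuous dependence on the data. Given $t_n\to t$ with $u(t_n)\to u(t)$, uniform continuity of $y$ on $\overline Q_T$ forces $\sqrt{y(t_n)}\to\sqrt{y(t)}$ in $C(\overline\Omega)$, and a difference argument analogous to uniqueness (exploiting that $q(t_n),q(t)$ live in a fixed bounded subset of $C(\overline\Omega)$ on which $\sinh$ is Lipschitz) yields $q(t_n)\to q(t)$ in $\Vc$. Treating simple $u$ first and passing to the limit via Pettis's theorem together with separability of $\Vc$ then gives strong measurability and the full $L^\infty$-bound \eqref{2.2:q_est_C}. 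The main obstacle is precisely the compactness of $T_\lambda$: the superlinear growth of $\sinh$ is tamed only because $\Vc\hookrightarrow C(\overline\Omega)$ is compact in one space dimension and Assumption~\ref{2.2:assumption_Y_M} keeps $\sqrt{y(t,\cdot)}$ uniformly bounded, restricting the Nemytskii operator to a set on which it behaves like a bounded map into $L^\infty(\Omega)$.
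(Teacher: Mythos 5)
Your proposal is correct and follows essentially the same route as the paper's Appendix~A.1: a Leray--Schauder homotopy at each fixed $t$, compactness from the one-dimensional compact embedding $\Vc\hookrightarrow C(\overline\Omega)$ composed with the Nemytskii map and the linear solution operator, the a-priori bound from testing with $q(t)$ and using the sign of $\sqrt{y}\,\sinh(\qrm)\qrm$, and uniqueness from the monotonicity of $\sinh$ (which the paper phrases via the mean value theorem and $\partial_\qrm f=\sqrt{y}\cosh(\xi)\ge0$). The remaining differences are cosmetic or additive --- you place the homotopy parameter on both the boundary datum and the nonlinearity rather than on the nonlinearity alone, you observe correctly that the energy estimate does not actually need the constant $M$ (only $\mu_{\mathsf a,3}$ and $\ka$, up to a trivial extra factor of $c_{\mathsf{eP}}$ in your stated constant), and you supply the measurability-in-$t$ argument that the paper leaves implicit.
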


In the following, we show the existence of the weak solution for the coupled system \eqref{0.0:coupled_system_together}. The proof is based on the Schauder fixed point theorem and is given in Section~\ref{Sec:A.2} of the appendix.

\begin{thm}
    \label{thm:ExCoupledSystem}
    Let Assumptions~{\em\ref{2.0:assumption_1}} and {\em\ref{2.2:assumption_Y_M}} hold and the constant $M$ in Assumption~{\em\ref{2.2:assumption_Y_M}} be chosen as
    \begin{equation}
        \label{2.3:M}
        M = 2 \, {\|y_\circ\|}_{C(\overline\Omega)} + \frac{2}{\ya}.
    \end{equation}
   Then, there exists a finite time $T_\circ=T_\circ(M)\in(0,T]$ such that \eqref{2.0:coupled_system_together} admits a unique solution pair $z=(y,q)\in \Yscr^{T_\circ}_M \times \Qscr^{T_\circ}$.
\end{thm}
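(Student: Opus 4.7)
The plan is to apply the Schauder fixed-point theorem to the decoupling map $\Phi$ obtained by first solving the elliptic equation and then the parabolic one. Fix a time horizon $T_\circ\in(0,T]$ to be determined. For $y\in\Yscr^{T_\circ}_M$, Theorem~\ref{2.2:thm_exist_C_est} furnishes a unique $q=q[y]\in\Qscr^{T_\circ}$ solving~\eqref{2.2:elliptic_state_weak} with the uniform bound $\|q(t)\|_{C(\overline\Omega)}\le c(M)c_\Uscr$; since $1/M\le y\le M$, the forcing $f(y,q[y])$ is pointwise bounded by $K(M):=\sqrt{M}\sinh(c(M)c_\Uscr)$. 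I then set $\tilde y=\Phi(y)$ to be the unique weak solution of the \emph{linear} parabolic problem
\begin{equation*}
\tilde y_t-\mu_1(\kappa_1\tilde y_x)_x=\mu_2 f(y,q[y])\text{ in }Q_{T_\circ},\quad \tilde y_x(\cdot,0)=\tilde y_x(\cdot,L)=0,\quad \tilde y(0,\cdot)=y_\circ,
\end{equation*}
which exists in $W(0,T_\circ;V,V')\cap C(\overline{Q_{T_\circ}})$ by standard linear parabolic theory, the source lying in $L^\infty(Q_{T_\circ})\subset L^2(0,T_\circ;V')$. A fixed point $y^\star=\Phi(y^\star)$ together with $q^\star=q[y^\star]$ will deliver the sought weak solution of \eqref{2.0:coupled_system_together}.

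The first step is to show that $\Phi$ maps $\Yscr^{T_\circ}_M$ into itself for $T_\circ$ small enough. Applying the weak maximum/comparison principle to $\tilde y$ against the spatially constant barriers $\min_{\overline\Omega}y_\circ-\mu_2 K(M)t$ and $\max_{\overline\Omega}y_\circ+\mu_2 K(M)t$ yields the pointwise bounds
\begin{equation*}
\ya-\mu_{\mathsf b,2}K(M)T_\circ\le\tilde y(t,x)\le\|y_\circ\|_{C(\overline\Omega)}+\mu_{\mathsf b,2}K(M)T_\circ\quad\text{on }\overline{Q_{T_\circ}}.
\end{equation*}
The choice~\eqref{2.3:M} of $M$ implies $1/M\le\ya/2$ and $\|y_\circ\|_{C(\overline\Omega)}\le M/2$, so selecting $T_\circ\le\min\{\ya/2,\,M/2\}/(\mu_{\mathsf b,2}K(M))$ ensures $1/M\le\tilde y\le M$ on $\overline{Q_{T_\circ}}$ as required.

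For Schauder's theorem I would work in the topology of $C(\overline{Q_{T_\circ}})$, in which $\Yscr^{T_\circ}_M$ is non-empty, closed, bounded and convex. Compactness of $\Phi(\Yscr^{T_\circ}_M)$ follows from a uniform bound of the image in a space that compactly embeds into $C(\overline{Q_{T_\circ}})$: since the sources are uniformly bounded in $L^\infty(Q_{T_\circ})$, parabolic regularity provides a uniform bound in $W(0,T_\circ;V,V')$ and, together with parabolic Hölder regularity, an Aubin--Lions argument yields compactness in $C(\overline{Q_{T_\circ}})$. Continuity of $\Phi$ is obtained from the continuous dependence of $q[\cdot]$ on $y$ (a consequence of the stability estimate behind Theorem~\ref{2.2:thm_exist_C_est} established in Appendix~\ref{Sec:A.1}), the Lipschitz continuity of $(\yrm,\qrm)\mapsto\sqrt{\yrm}\sinh(\qrm)$ on $[1/M,M]\times[-c(M)c_\Uscr,c(M)c_\Uscr]$, and standard energy estimates for the linear parabolic problem.

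Uniqueness would be closed by a Grönwall argument: for two solution pairs $(y_1,q_1),(y_2,q_2)\in\Yscr^{T_\circ}_M\times\Qscr^{T_\circ}$, testing the difference of the parabolic equations against $y_1-y_2$ and exploiting the Lipschitz dependence of $q[\cdot]$ on $y$ as well as that of $f$ on its compact range yields an estimate of the form $\|y_1(t)-y_2(t)\|_H^2\le C\int_0^t\|y_1-y_2\|_H^2\,\mathrm ds$. The main obstacle I expect is the quantitative bookkeeping: the bound $c(M)$ from Theorem~\ref{2.2:thm_exist_C_est} enters $K(M)$ through a hyperbolic sine, so $K(M)$ grows rapidly with $M$ and the smallness condition on $T_\circ$ must be compatible with the invariance bounds dictated by~\eqref{2.3:M}. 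Tracking this dependence explicitly is what guarantees that a positive $T_\circ=T_\circ(M)$ satisfying all constraints does indeed exist.
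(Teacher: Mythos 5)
Your proposal follows the same top-level strategy as the paper's proof in Appendix~\ref{Sec:A.2}: a Schauder fixed-point argument for the map that solves the elliptic equation via Theorem~\ref{2.2:thm_exist_C_est} and then the linear parabolic problem, with invariance of $\Yscr^{T_\circ}_M$ obtained by shrinking $T_\circ$ and compactness supplied by parabolic regularity. The technical routes differ in two places. For the invariance, the paper splits $y=\phi+\psi$ into the inhomogeneous part with zero initial data (controlled in $C([0,T];V)$ by an energy estimate, using that $\|\mathfrak f\|_{L^2(0,t;H)}\to 0$ as $t\to 0$) and the homogeneous part with data $y_\circ$ (controlled by the comparison principle, giving $\psi\in[2/M,M/2]$); you instead compare the full solution against spatially constant barriers $\min y_\circ\mp\mu_2K(M)t$, which is equally valid for the Neumann problem and arguably cleaner, since the constant barriers are genuine super/subsolutions. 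For compactness, the paper exploits $y_\circ\in V$ to get maximal regularity $y\in W(0,T;E,H)$ and the chain $W(0,T;E,H)\hookrightarrow\hookrightarrow C([0,T];H^{2\theta}(\Omega))\hookrightarrow C(\overline Q_T)$ for $\theta\in(\tfrac14,\tfrac12)$; your invocation of Aubin--Lions from a $W(0,T;V,V')$ bound alone would \emph{not} suffice (that only yields compactness into $C([0,T];H)$), so the parabolic H\"older estimate you mention must carry the full weight of the argument --- make that the explicit workhorse rather than an afterthought. Finally, you are more careful than the paper in tracking the dependence of $q[y]$ on $y$ in the continuity and uniqueness steps (the paper's continuity estimate holds $q$ fixed); to close that step you still need the stability bound $\|q[y_1](t)-q[y_2](t)\|_{\Vc}\lesssim\|y_1(t)-y_2(t)\|_H$, which follows by subtracting the two elliptic equations, testing with $q_1-q_2$, and using the monotonicity of $\sinh$ together with $|\sqrt{y_1}-\sqrt{y_2}|\le|y_1-y_2|/(2\sqrt{1/M})$ --- this is asserted but not proved in your sketch.
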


\begin{rem}
    \label{Rem:y0}
    Note that,  due to \eqref{2.3:M} we can see that $y_\circ \le M/2$ and $y_\circ \ge \ya\ge 2/M$ for all $x\in \overline\Omega$.\hfill$\Diamond$
\end{rem}

\section{The discretization}
\label{Sec:3}

Problem \eqref{2.1:weak_all} has to be discretized for its numerical solution. First, we introduce a standard Galerkin approximation, which leads to a high-dimensional non-linear system of ordinary differential equations. Then, we formulate a reduced-order discretization. 

\subsection{The full-order discretization}
\label{Sec:3.1}

The discretization of \eqref{2.1:weak_all} is done in two steps. Suppose that $\{\varphi_i\}_{i=0}^n\subset V$ are linearly independent and that $\{\varphi_i\}_{i=1}^n\subset \Vc$ holds. We define the finite-dimensional subspaces $\Vh=\mathrm{span}\,\{\varphi_0,\ldots,\varphi_n\}\subset V$ and $\Vch=\mathrm{span}\,\{\varphi_1,\ldots,\varphi_n\}\subset \Vc$. Then, by Galerkin projection of equations \eqref{2.1:weak_all} onto $\Vh$ and $\Vch$, our goal is to find $z^h=(y^h,q^h)\in H^1(0,T;\Vh)\times L^\infty(0,T;\Vch)$ solving
\begin{subequations}
    \label{3.1:galerkin_projected_all}
    \begin{align}
        \label{3.1:galerkin_projected_parabolic}
        \frac{\mathrm d}{\mathrm dt}\,{\langle y^h(t),\varphi^y\rangle}_H+a^1_\bmu (y^h(t),\varphi^y)+{\langle g^1_\bmu [z^h(t)],\varphi^y\rangle}_{V',V}&=0&&\text{for all }\varphi^y\in \Vh\text{ and }t\in(0,T]\text{ a.e.},\\
        \label{3.1:galerkin_projected_ic}
        \langle y^h(0)-y^h_\circ,\varphi^y\rangle_H&=0 &&\text{for all } \varphi^y \in \Vh,\\
        \label{3.1:galerkin_projected_elliptic}
        a^2_\bmu(q^h(t),\varphi^q)+{\langle g^2_\bmu[z^h(t)],\varphi^q\rangle}_{V_\circ',\Vc}-{\langle b(t) , \varphi^q\rangle}_{V_\circ',\Vc}&=0&&\text{for all }\varphi^q\in\Vch\text{ and }t\in[0,T]\text{ a.e.},
    \end{align}
\end{subequations}
where $y^h_\circ$ is a projection of $y_\circ$ on $\Vh$ given as
\begin{align*}
    y_\circ^h=\mathrm{argmin}\,\left\{{\|y_\circ-\varphi^h\|}_H\,\vert\,\varphi^h\in\Vh\right\}.    
\end{align*}
Note that
\begin{equation*}
    y^h(t)=\sum_{i=0}^n \yrm_{i}(t) \varphi_i, \qquad q^h(t)=\sum_{i=1}^n \qrm_{i}(t) \varphi_i \qquad\text{for }t\in[0,T]
\end{equation*}
so that \eqref{3.1:galerkin_projected_all} reduces into finding the coefficient vectors $\yrm(t)=(\yrm(t))_{0\le i\le n}$ and $\qrm(t)=(\qrm(t))_{1\le i\le n}$ solving the differential algebraic system
\begin{subequations}
    \label{3.1:semidiscretized_all}
    \begin{align}
        \label{3.1:semidiscretized_all-a}
        \mathrm M_\yrm \dot\yrm(t) +\mu_1\mathrm A_1 \yrm(t) -\mu_2 \mathrm M_\yrm \mathrm f_\yrm (\yrm(t),\qrm(t))&=0&&t\in(0,T]\text{ a.e.},\\
        \mathrm M_\yrm \yrm(0)&=\mathrm y_\circ,&&\\
        \label{3.1:semidiscretized_all-c}
        \mu_3\mathrm A_2 \qrm(t) + \mu_4 \mathrm M_\qrm \mathrm f_\qrm (\yrm(t),\qrm(t))+\mathrm b(t)&=0&&t\in[0,T]\text{ a.e.}
    \end{align}
\end{subequations}
for
\begin{align*}
    &\mathrm{M}_\yrm=((\langle\varphi_j,\varphi_i\rangle_H))\in\mathbb{R}^{(n+1)\times(n+1)}, &&\mathrm{A}_1=((\hat{a}^1(\varphi_j,\varphi_i)))\in\mathbb{R}^{(n+1)\times(n+1)} && \text{for } i,j=0,\dots,n \\
    &\yrm_\circ=(\langle y_\circ , \varphi_i \rangle_H) \in \mathbb R^{n+1} &&&& \text{for } i=0,\dots,n, \\
    &\mathrm{M}_\qrm=((\langle\varphi_j,\varphi_i\rangle_H))\in\mathbb{R}^{n\times n}, &&\mathrm{A}_2=((\hat{a}^2(\varphi_j,\varphi_i)))\in\mathbb{R}^{n\times n} && \text{for } i,j=1,\dots,n, \\
    &\mathrm{b}(t)=(\langle b(t),\varphi_i\rangle_{\Vc',\Vc})\in\mathbb R^n &&&& \text{for } i=1,\dots,n.
\end{align*}
Moreover, $\mathrm f_\yrm(\yrm(t),\qrm(t)) \in \mathbb R^{n+1}$ (resp. $\mathrm f_\qrm(\yrm(t),\qrm(t)) \in \mathbb R^n$) is the coefficient vector satisfying
\begin{align*}
     f(y^h(t,x),q^h(t,x))\approx \left\{
     \begin{aligned}
         &\sum_{j=0}^n\mathrm f_{\yrm,j}(\yrm(t),\qrm(t))\varphi_j(x)&&\text{in \eqref{3.1:semidiscretized_all-a}},\\
         &\sum_{j=1}^n\mathrm f_{\qrm,j}(\yrm(t),\qrm(t))\varphi_j(x)&&\text{in \eqref{3.1:semidiscretized_all-c}}
     \end{aligned}
     \right\}\quad\text{for }x\in\Omega\text{ and }t\in[0,T]\text{ a.e.}
\end{align*}
Note that none of the matrices depend on $\bmu$ due to the affine dependence of the parameters as in \eqref{2.1:bilinear_forms} and \eqref{2.1:nonlinear_operators}.

For solving \eqref{3.1:semidiscretized_all} we apply the implicit Euler method for the time integration (cf., e.g., \cite{Qua17}) on an equidistant time grid $t_k = (k-1) \Delta t$, $k=1,\dots,K$ and $\Delta t = T/(K-1)$. Then, the problem is to find $\{\yrm^k\}_{k=1}^K\subset\mathbb R^{n+1}$ and $\{\qrm^k\}_{k=1}^K\subset\mathbb R^n$ solving
\begin{equation}
    \label{3.1:fully_discretized}
    \begin{aligned}
        \mathrm{M}_\yrm ( \yrm^{k} - \yrm^{k-1} ) + \mu_1 \Delta t \mathrm{A}_1 \yrm^{k} - \mu_2 \Delta t \mathrm{M}_\yrm \mathrm{f}_\yrm (\yrm^{k}, \qrm^{k})&=0&\text{for }k=2,\dots,K, \\
        \mathrm{M}_\yrm \yrm^{1}&=\yrm_{\circ}, \\
        \mu_3 \mathrm{A}_2 \qrm^{k} + \mu_4 \, \mathrm{M}_\qrm \mathrm{f}_\qrm (\yrm^{k}, \qrm^{k}) + \mathrm{b}^{k}&=0&\text{for }k=1,\ldots,K.
    \end{aligned}
\end{equation}
These solutions are approximations of the FE solutions of \eqref{3.1:galerkin_projected_all} at each time step, namely
\begin{equation*}
    y^h(t_k) \approx \sum_{i=0}^n \yrm^k_i \varphi_i\quad\text{and}\quad q^h(t_k) \approx \sum_{i=1}^n \qrm^k_i \varphi_i\quad \text{for } k=1,\dots,K.
\end{equation*}
Finally, to solve this non-linear system, we use Newton's method at each time step $k$, by defining the non-linear function
\begin{equation*}
    \mathrm F_\bmu (\yrm^k, \qrm^k) = \left(\begin{array}{c}
        (\mathrm M_\yrm+\mu_1\Delta t \mathrm A_1)\yrm^k-\mathrm M_\yrm\yrm^{k-1}-\mu_2\Delta t\mathrm M_\yrm\mathrm f(\yrm^k,\qrm^k)\\[1mm]
        \mu_3\mathrm A_2\qrm^k + \mu_4\mathrm M_\qrm\mathrm f(\yrm^k,\qrm^k)+\mathrm b^k
    \end{array}\right) \in \mathbb R^{2n+1}.
\end{equation*}
Then, the algorithm to evaluate the state variables given parameter $\bmu$ is as follows:
\begin{itemize}
    \item Evaluate $\yrm^1$ by projecting the initial value.
    \item Compute $\qrm^1$ solving the decoupled elliptic PDE given $\yrm^1$ to get a consistent initial condition for the state $q$.
    \item For $k = 2, \dots, K$ find the root of the non-linear equation $\mathrm F_\bmu(\yrm^k,\qrm^k) = 0$.
\end{itemize}

\subsection{The reduced-order discretization}
\label{Sec:3.2}

In order to speed-up calculations, we construct reduced-order spaces $\Vl \subset \Vh$ and $\Vcl \subset \Vch$, of dimensions respectively $\elly\ll n$ and $\ellq\ll n$, and a set of basis for each space, $\{\psi^y_1,\dots,\psi^y_\elly\} \subset \Vl$ and $\{\psi^q_1,\dots,\psi^q_\ellq\} \subset \Vcl$. These spaces are evaluated by utilizing the proper orthogonal decomposition (POD) method (cf., e.g., \cite{VK01}). Furthermore, $\Psiy\in\mathbb{R}^{(n+1)\times\elly}$ is the matrix of coordinates of the basis $\{\psi^y_i\}_{i=1}^\elly$ with respect to the basis $\{\varphi_i\}_{i=0}^n$ of $V^{h}$, and similarly $\Psiq\in\mathbb{R}^{n\times\ellq}$ is the matrix of coordinates of the basis $\{\psi^q_i\}_{i=1}^\ellq$ with respect to the basis $\{\varphi_i\}_{i=1}^n$ of $V^{h}_\circ$. Namely,
\begin{equation*}
    \psi^y_j = \sum_{i=0}^{n} (\Psiy)_{ij} \, \varphi_i \text{ for } j=1,\dots,\elly\quad \mathrm{and}\quad\psi^q_j = \sum_{i=1}^{n} (\Psiq)_{ij} \, \varphi_i \text{ for } j=1,\dots,\ellq.
\end{equation*}

As a first step we project all matrices on the reduced spaces so that a reduced-order approximation of \eqref{3.1:fully_discretized} reads as follows: find $\{\hat\yrm^{k}\}_{k=1}^K\subset\mathbb R^\elly$ and $\{\hat\qrm^{k}\}_{k=1}^K\subset\mathbb R^\ellq$ such that
\begin{equation}
    \label{3.2:ROM_pre_DEIM}
    \begin{aligned}
        (\mathrm{M}_\yrm^\ell+\mu_1\Delta t\mathrm{A}_1^\ell)\hat\yrm^{k}-\mathrm{M}_\yrm^\ell \hat\yrm^{k-1}-\mu_2\Delta t\Psi_\yrm^\top\mathrm{M}_\yrm\mathrm{f}_\yrm(\Psiy \hat\yrm^{k},\Psiq\hat\qrm^{k})&=0&\text{for }k=2,\dots,K,\\
        \mathrm{M}_\yrm^\ell \hat\yrm^{1} &= \yrm^\ell_\circ,\\
        \mu_3\mathrm{A}_2^\ell\hat\qrm^{k}+\mu_4\Psi_\qrm^\top\mathrm{M}_\qrm\mathrm{f}_\qrm(\Psiy\hat\yrm^{k},\Psiq\hat\qrm^{k})+\mathrm{b}^{k,\ell}&=0&\text{for }k=1,\dots,K
    \end{aligned}
\end{equation}
with $\mathrm{M}^\ell_\yrm=\Psi_\yrm^\top\mathrm{M}_\yrm\Psiy$, $\mathrm{A}^\ell_1=\Psi_\yrm^\top\mathrm{A}_1\Psiy$, $\mathrm{A}^\ell_2=\Psi_\qrm^\top\mathrm{A}_2\Psiq$, $\mathrm{b}^{k,\ell}=\Psi_\qrm^\top\mathrm{b}^{k}$ for $k=1,\dots,K$, and $\yrm^\ell_\circ=\Psi_\yrm^\top\yrm_\circ$.

System \eqref{3.2:ROM_pre_DEIM} still depends on dimension $n$ through the evaluation of the non-linearities $\mathrm{f}_\yrm$ and $\mathrm f_\qrm$. We can use the empirical interpolation method (EIM) or the discrete empirical interpolation method (DEIM) (cf., e.g., \cite{BMNP04} and \cite{CS10}) to make the evaluation of the system independent of $n$. In our case, we use DEIM where, in a nutshell, the vectors $\mathrm f^{k}_\yrm:=\mathrm{f}_\yrm(\Psiy\hat\yrm^k,\Psiq\hat\qrm^k)$ and $\mathrm f^{k}_\qrm:=\mathrm{f}_\qrm(\Psiy\hat\yrm^k,\Psiq\hat\qrm^k)$ for $k=1,\dots,K$ are projected onto a smaller space of dimension $\ellf\ll n$. For the $\qrm$-system this is done by finding matrices $\Psif\in\mathbb R^{n\times\ellf}$, and $\Prm\in\mathbb R^{n\times\ellf}$ such that
\begin{equation*}
    \mathrm{f}^k\approx\Psif(\PrmT\Psif)^{-1}\PrmT\mathrm{f}^k,
\end{equation*}
where the matrix $\Psif$ is given by the POD method using the snapshots $\mathrm f^k$ for $k=1,\dots,K$. The matrix $\Prm$ is, on the other hand, composed only by zeros and ones such that $\PrmT\mathrm v\in\mathbb R^\ellf$ contains only selected rows of $\mathrm v\in\mathbb R^n$. For more information, see \cite{CS10}.

\begin{rem}
    For the $\yrm$-system (of dimension $n+1$) we set
    \begin{align*}
        \tilde\Psi_\frm = \left[\begin{array}{c}0\\\Psif\end{array}\right]\in\mathbb{R}^{(n+1)\times\ellf},\quad
        \tilde\Prm = \left[\begin{array}{c}0\\\Prm\end{array}\right]\in\mathbb{R}^{(n+1)\times\ellf},
    \end{align*}
    and get $\tilde\frm^k\approx\tilde\Psi_\frm(\tilde\Prm^\top\tilde\Psi_\frm)^{-1}\tilde\Prm^\top\tilde\frm^k\in\mathbb{R}^{n+1}$.\hfill$\Diamond$
\end{rem}

Since the evaluation of $\frm^k$ is done component-wise, we can write
\begin{equation*}
    \PrmT\mathrm{f}^k=\PrmT\mathrm{f}(\Psiy\hat\yrm^k,\Psiq\hat\qrm^k)=\mathrm{f}(\tildePrmT\Psiy\hat\yrm^k,\PrmT\Psiq\hat\qrm^k)=:\mathrm{f}(\hatPsiy\hat\yrm^k,\hatPsiq\hat\qrm^k)
\end{equation*}
for $\hatPsiy=\tildePrmT\Psiy\in\mathbb R^{\ellf\times\elly}$ and $\hatPsiq=\PrmT\Psiq \in \mathbb R^{\ellf\times\ellq}$.
Then, for 
\begin{equation*}
    \mathrm{G}_\yrm=\Psi_\yrm^\top\mathrm{M}_\yrm\tilde\Psi_\frm(\tilde\Prm^\top\tilde\Psi_\frm)^{-1}\in\mathbb{R}^{\elly\times\ellf}
    \quad\text{and}\quad
    \mathrm{G}_\qrm=\Psi_\qrm^\top\mathrm{M}_\qrm\Psif(\PrmT\Psif)^{-1}\in\mathbb{R}^{\ellq\times\ellf}
\end{equation*}
system \eqref{3.2:ROM_pre_DEIM} is approximated by
\begin{equation}
    \label{3.2:ROM}
    \begin{aligned}
        (\mathrm{M}_\yrm^\ell+\mu_1\Delta t\mathrm{A}_1^\ell)\hat\yrm^{k}-\mathrm{M}_\yrm^\ell\hat\yrm^{k-1}-\mu_2\Delta t\textrm{G}_\yrm\mathrm{f}(\hat\Psi_\yrm\hat\yrm^{k},\hat\Psi_\qrm\hat\qrm^{k}) &=0&\text{for }k=2,\dots,K,\\
        \mathrm{M}_\yrm^\ell \hat\yrm^{1} &= \yrm^\ell_\circ,\\
        \mu_3\mathrm{A}_2^\ell\hat\qrm^{k}+\mu_4\mathrm{G}_\qrm\mathrm{f}(\hat\Psi_\yrm\hat\yrm^{k},\hat\Psi_\qrm\hat\qrm^{k})+\mathrm{b}^{k,\ell}&=0&\text{for }k=1,\dots,K,\\
    \end{aligned}
\end{equation}
which is finally independent of $n$. In the following, we call system \eqref{3.2:ROM} reduced-order (RO) model. The RO solutions $\{\hat\yrm^{k}\}_{k=1}^K\subset\mathbb{R}^\elly$ and $\{\hat\qrm^k\}_{k=1}^K\subset\mathbb{R}^\ellq$ are interpreted as a reduced-order approximations for $\{\yrm^k\}_{k=1}^K\subset\mathbb{R}^{n+1}$ and $\{\qrm^k\}_{k=1}^K\subset\mathbb{R}^n$, namely
\begin{equation*}
    \yrm^k\approx{\yrm}^{k,\ell}:=\Psiy\hat\yrm^{k}
    \qquad\text{and}\qquad
    \qrm^k\approx{\qrm}^{k,\ell}:=\Psiq\hat\qrm^{k}.
\end{equation*}

\section{Hierarchical a-posteriori error for the state equation}
\label{Sec:4}

The accuracy of the reduced-order solution is controlled by hierarchical error estimates. Here we extend the approach in \cite{HORU19} to the time-dependent and non-linear coupled system \eqref{2.0:coupled_system_together}.

\subsection{The error estimator}
\label{Sec:4.1}

From now on, we add the dependence on the parameter $\bmu$. Namely, $\{\yrm^k(\bmu)\}_{k=1}^K$ and $\{\qrm^k(\bmu)\}_{k=1}^K$ are the FE solutions (solving \eqref{3.1:fully_discretized}), while $\{\hat\yrm^k(\bmu)\}_{k=1}^K$ and $\{\hat\qrm^k(\bmu)\}_{k=1}^K$ are the ROM solutions (solving problem \eqref{3.2:ROM}) with parameter $\bmu\in\Pad$.

\textit{A-posteriori estimates} are needed to control the error of the reduced-order approximation without knowing the full-order solution (see, e.g., \cite{BKMOSV22,GP05,Haa13}). These estimates give us upper bounds for the  error of the differences $\yrm^{k}(\bmu)-\yrm^{k,\ell}(\bmu)$ and $\qrm^{k}(\bmu)-\qrm^{k,\ell}(\bmu)$ without actually evaluating the FE solutions (whose evaluation can lead up to long computation time). While it is possible to use residual-based norms (as in the case of the error estimate in \cite{GP05}), in the non-linear framework this is not online efficient, since the evaluation of the coupling term needs to be done in the full FE dimensions $n$.

In this section we will utilize hierarchical error estimators which are well-known, e.g., for adaptive finite elements; cf. \cite{BS93,CO96,DSM12}, for instance. The idea is to use the difference between two approximations with different orders to estimate the RB error. Here we define (approximated) hierarchical error estimators for the RB method applying the ideas in \cite{HORU19}. Given RB spaces $\Vl$ and $\Vcl$, with respective reduced bases associated with the matrices $\Psiyl$ and $\Psiql$, we define the RB errors as
\begin{equation*}
    E_\yrm^\ell(\bmu):=\left(\sum_{k=1}^K\alpha_k\,{\|\yrm^{k}(\bmu)-\yrm^{k,\ell}(\bmu)\|}^2_{\Syrm}\right)^{1/2}
    \quad\text{and}\quad
    E_\qrm^\ell(\bmu):=\left(\sum_{k=1}^K\alpha_k\,{\|\qrm^{k}(\bmu)-\qrm^{k,\ell}(\bmu)\|}^2_{\Sqrm}\right)^{1/2},
\end{equation*}
where the $\alpha_k$'s are trapezoidal weights, $\Syrm=((\langle\varphi_j,\varphi_i\rangle_V))_{0\le i,j\le n}$ and $\Sqrm=((\langle\varphi_j,\varphi_i\rangle_\Vc))_{1\le i,j\le n}$ are positive definite, symmetric weighting matrices. These errors approximate the errors $y^h-y^\ell$ and $q^h-q^\ell$ in the $L^2(0,T;V)$- and $L^2(0,T;\Vc)$-norms, respectively. If we had another couple of RB spaces, $\Vm$ and $\Vcm$ with $\elly<m_\yrm\ll n$ and $\ellq<m_\qrm\ll n$ such that $E^m_\yrm(\bmu)\le\varepsilon$ and $E^m_\qrm(\bmu)\le\varepsilon$, then using the triangular inequality we get
\begin{align*}
    E^\ell_\yrm(\bmu)^2 &= \sum_{k=1}^K \alpha_k\,{\|\yrm^{k}(\bmu)-\yrm^{k,\ell}(\bmu)\|}^2_{\Syrm}\le\sum_{k=1}^K\alpha_k\,{\|\yrm^{k}(\bmu)-\yrm^{k,m}(\bmu)\|}^2_{\Syrm}+\sum_{k=1}^K\alpha_k\,{\|\yrm^{k,m}(\bmu)-\yrm^{k,\ell}(\bmu)\|}^2_{\Syrm}\\
    &\le\varepsilon^2+\Delta_{\yrm}^{\ell,m}(\bmu)^2
\end{align*}
and, similarly, $E^\ell_\qrm(\bmu)^2\le\varepsilon^2+\Delta_\qrm^{\ell,m}(\bmu)^2$ with the computable quantities
\begin{equation*}
    \Delta_{\yrm}^{\ell,m}(\bmu)^2:=\sum_{k=1}^K\alpha_k\,{\|\yrm^{k,m}(\bmu)-\yrm^{k,\ell}(\bmu)\|}^2_{\Syrm}
    \quad\text{and}\quad
    \Delta_{\qrm}^{\ell,m}(\bmu)^2:=\sum_{k=1}^K\alpha_k\,{\|\qrm^{k,m}(\bmu)-\qrm^{k,\ell}(\bmu)\|}^2_{\Sqrm}.
\end{equation*}
These estimates are online-efficient (i.e., their evaluation does not depend on the FE dimension $n$), since
\begin{align*}
    \Delta_{\yrm}^{\ell,m}(\bmu)^2&=\sum_{k=1}^K\alpha_k\left(\hat\yrm^{k,m}(\bmu)^\top\Srm_\yrm^{m,m}\hat\yrm^{k,m}(\bmu)-2\hat\yrm^{k,m}(\bmu)^\top\Srm_\yrm^{m,\ell}\hat\yrm^{k,\ell}(\bmu)+\yrm^{k,\ell}(\bmu)^\top\Srm_\yrm^{\ell,\ell}\hat\yrm^{k,\ell}(\bmu)\right),\\
    \Delta_{\qrm}^{\ell,m}(\bmu)^2&=\sum_{k=1}^K\alpha_k\left(\hat\qrm^{k,m}(\bmu)^\top\Srm_\qrm^{m,m}\hat\qrm^{k,m}(\bmu)-2\hat\qrm^{k,m \top}(\bmu)\Srm^{m,\ell}_\qrm\hat\qrm^{k,\ell}(\bmu)+\qrm^{k,\ell}(\bmu)^\top\Srm_\qrm^{\ell,\ell}\hat\qrm^{k,\ell}(\bmu)\right),
\end{align*}
where $\{\hat\yrm^{k,\ell}(\bmu),\hat\qrm^{k,\ell}(\bmu)\}_{k=1}^K$ (resp. $\{\hat\yrm^{k,m}(\bmu),\hat\qrm^{k,m}(\bmu)\}_{k=1}^K$) are the solution of \eqref{3.2:ROM} in dimensions $\elly$ and $\ellq$ (resp. $m_\yrm$ and $m_\qrm$), $\Srm_\yrm^{m,m}=\Psi_\yrm^{m\top}\Syrm\Psiym\in\mathbb{R}^{m_\yrm\times m_\yrm}$, $\Srm_\yrm^{m,\ell}=\Psi_\yrm^{m\top}\Syrm\Psiyl\in\mathbb{R}^{m_\yrm\times \elly}$, $\Srm_\yrm^{\ell,\ell}=\Psi_\yrm^{\ell\top}\Syrm\Psiyl\in\mathbb{R}^{\elly\times\elly}$, and the matrices $\Srm_\qrm^{m,m}$, $\Srm_\qrm^{m,\ell}$, $\Srm_\qrm^{\ell,\ell}$ are defined similarly.

To analyze the hierarchical error estimates it is useful to set $\Vl\subset\Vm$ and $\Vcl\subset\Vcm$ and to assume the following \emph{saturation property} on the bigger RB space $\Vm$: there exist $\sigma_\yrm,\sigma_\qrm \in (0,1)$ such that
\begin{equation}
    \label{2.5:saturation_assumption}
    E^m_\yrm(\bmu)^2 \le \sigma_\yrm E^\ell_\yrm(\bmu)^2
    \quad\text{and}\quad
    E^m_\qrm(\bmu)^2 \le \sigma_\qrm E^\ell_\qrm(\bmu)^2
    \qquad\text{for all }\bmu \in \Pad.
\end{equation}
This is a natural assumption since one expects the RB error to decrease with the increase of the RB dimension, but this condition will be enforced during the construction of the spaces $\Vm$ and $\Vcm$.
\begin{prop}
    If \eqref{2.5:saturation_assumption} holds, then for all $\bmu \in \Pad$
    \begin{subequations}
        \label{PropEst}
        \begin{align}
            \label{PropEst-a}
            \frac{\Delta_\yrm^{\ell,m}(\bmu)^2}{1+\sigma_\yrm}&\le E_\yrm^\ell(\bmu)^2=\sum_{k=1}^K\alpha_k\,{\|\yrm^{k}(\bmu)-\yrm^{k,\ell}(\bmu)\|}^2_{\Syrm}\le\frac{\Delta_\yrm^{\ell,m}(\bmu)^2}{1-\sigma_\yrm},\\
            \label{PropEst-b}
            \frac{\Delta_\qrm^{\ell,m}(\bmu)^2}{1+\sigma_\qrm}&\le E_\qrm^\ell(\bmu)^2=\sum_{k=1}^K\alpha_k\,{\|\qrm^{k}(\bmu)-\qrm^{k,\ell}(\bmu)\|}^2_{\Sqrm}\le\frac{\Delta_\qrm^{\ell,m}(\bmu)^2}{1-\sigma_\qrm}.
        \end{align}
    \end{subequations}
\end{prop}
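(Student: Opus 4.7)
My plan is to run the entire proof off the Pythagorean-type splitting that is already implicitly used in the paragraph preceding the proposition. Concretely, for every time step $k$ I will expand
\[
{\|\yrm^k(\bmu)-\yrm^{k,\ell}(\bmu)\|}_{\Syrm}^2 = {\|\yrm^k(\bmu)-\yrm^{k,m}(\bmu)\|}_{\Syrm}^2 + 2\langle\yrm^k(\bmu)-\yrm^{k,m}(\bmu),\yrm^{k,m}(\bmu)-\yrm^{k,\ell}(\bmu)\rangle_{\Syrm} + {\|\yrm^{k,m}(\bmu)-\yrm^{k,\ell}(\bmu)\|}_{\Syrm}^2,
\]
and exploit the nesting $\Vl\subset\Vm$: since the second factor of the cross term then lies in the larger reduced space $\Vm$, Galerkin orthogonality of the $m$-level RB residual onto $\Vm$ in the $\Syrm$-inner product forces the cross term to vanish. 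Multiplying the resulting pointwise identity by the trapezoidal weights $\alpha_k$ and summing over $k=1,\dots,K$ then yields the Pythagorean identity
\[
E_\yrm^\ell(\bmu)^2 = E_\yrm^m(\bmu)^2 + \Delta_\yrm^{\ell,m}(\bmu)^2.
\]

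With this identity in hand, both estimates in \eqref{PropEst-a} are immediate from the squared saturation assumption \eqref{2.5:saturation_assumption}. On the one hand,
\[
E_\yrm^\ell(\bmu)^2 = E_\yrm^m(\bmu)^2+\Delta_\yrm^{\ell,m}(\bmu)^2 \le \sigma_\yrm\, E_\yrm^\ell(\bmu)^2+\Delta_\yrm^{\ell,m}(\bmu)^2,
\]
and since $\sigma_\yrm<1$ a simple rearrangement gives the upper bound $E_\yrm^\ell(\bmu)^2\le\Delta_\yrm^{\ell,m}(\bmu)^2/(1-\sigma_\yrm)$. On the other hand, directly from the identity I read off $\Delta_\yrm^{\ell,m}(\bmu)^2 = E_\yrm^\ell(\bmu)^2-E_\yrm^m(\bmu)^2\le E_\yrm^\ell(\bmu)^2\le(1+\sigma_\yrm)\,E_\yrm^\ell(\bmu)^2$, which is the lower bound $E_\yrm^\ell(\bmu)^2\ge\Delta_\yrm^{\ell,m}(\bmu)^2/(1+\sigma_\yrm)$. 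The same chain of arguments, with $\Syrm$, $\Vl\subset\Vm$ and $\sigma_\yrm$ replaced by $\Sqrm$, $\Vcl\subset\Vcm$ and $\sigma_\qrm$, produces the $\qrm$-estimate \eqref{PropEst-b}.

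The genuinely delicate step is the vanishing of the cross term, i.e., the Galerkin-type orthogonality $\langle\yrm^k(\bmu)-\yrm^{k,m}(\bmu),\phi\rangle_{\Syrm}=0$ for every $\phi\in\Vm$. For a linear Galerkin RB scheme it is automatic, but system \eqref{2.0:coupled_system_together} is nonlinear and the RB model additionally uses DEIM to treat $f$, so strict orthogonality does not hold. In line with the paradigm of \cite{HORU19}, I would either impose it as part of the hierarchical construction of $\Vl\subset\Vm$ (for instance, by defining $\yrm^{k,\ell}(\bmu)$ as the $\Syrm$-orthogonal projection of $\yrm^{k,m}(\bmu)$ onto $\Vl$), or weaken the Pythagorean identity to the one-sided inequality $\langle\yrm^k(\bmu)-\yrm^{k,m}(\bmu),\yrm^{k,m}(\bmu)-\yrm^{k,\ell}(\bmu)\rangle_{\Syrm}\le 0$ together with its mirror version obtained after permuting the triple $(\yrm^k,\yrm^{k,m},\yrm^{k,\ell})$; the latter is precisely the minimal ingredient that keeps both bounds in \eqref{PropEst} intact and is what is tacitly invoked already in the paragraph above the proposition.
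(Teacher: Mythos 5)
Your overall strategy is genuinely different from the paper's, and it rests on a step that is not available in this setting. The paper never invokes any orthogonality: it treats $\bigl(\sum_{k}\alpha_k\|\cdot\|_{\Syrm}^2\bigr)^{1/2}$ as a norm on the space--time product space, disposes of the degenerate case $E_\yrm^\ell(\bmu)=0$ separately, and then bounds the ratio $\Delta_\yrm^{\ell,m}(\bmu)^2/E_\yrm^\ell(\bmu)^2$ from below and above using only the triangle and reverse triangle inequalities together with \eqref{2.5:saturation_assumption}. Your proof instead hinges on the exact Pythagorean identity $E_\yrm^\ell(\bmu)^2=E_\yrm^m(\bmu)^2+\Delta_\yrm^{\ell,m}(\bmu)^2$; once that identity is granted, your algebra for both bounds is correct (the lower bound in fact only needs $E_\yrm^m(\bmu)^2\ge0$), but the identity itself is the problem.

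The gap is the cross term. You flag it yourself, but neither of your escape routes works. Orthogonality of $\yrm^{k}(\bmu)-\yrm^{k,m}(\bmu)$ against $\Vm$ in the $\Syrm$-inner product fails here for several independent reasons: the reduced solutions solve the \emph{nonlinear} system \eqref{3.2:ROM} with a DEIM-approximated coupling term, so there is no linear Galerkin relation at all; even for a linear problem, Galerkin orthogonality would hold with respect to the bilinear form of the implicit-Euler step, not with respect to $\Syrm=((\langle\varphi_j,\varphi_i\rangle_V))$; and the error committed at step $k-1$ propagates into the equation at step $k$, destroying step-wise orthogonality even in the linear case. Your first remedy --- redefining $\yrm^{k,\ell}(\bmu)$ as the $\Syrm$-orthogonal projection of $\yrm^{k,m}(\bmu)$ onto $\Vl$ --- changes the object being estimated: in the paper $\yrm^{k,\ell}(\bmu)$ is the solution of \eqref{3.2:ROM} in dimension $\elly$, and that is the quantity entering $E_\yrm^\ell$, $\Delta_\yrm^{\ell,m}$ and the cost functional, so you would be proving the proposition for a different estimator. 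Your second remedy --- the one-sided inequality $\langle\yrm^{k}(\bmu)-\yrm^{k,m}(\bmu),\yrm^{k,m}(\bmu)-\yrm^{k,\ell}(\bmu)\rangle_{\Syrm}\le0$ ``together with its mirror version'' --- is not a weakening: the two sign conditions together force the cross term to vanish, which is exactly the orthogonality you were trying to avoid. The repair is to drop the identity altogether and argue as the paper does, directly from the (reverse) triangle inequality for the weighted space--time norm combined with the saturation property, treating the case $E_\yrm^\ell(\bmu)=0$ separately.
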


\begin{proof}
    We only prove \eqref{PropEst-a}. The proof of \eqref{PropEst-b} is similar. If $\|\yrm^{k}(\bmu)-\yrm^{k,\ell}(\bmu)\|_{\Syrm}=0$ for $k=1,\ldots,K$, then $\|\yrm^{k}(\bmu)-\yrm^{k,m}(\bmu)\|_{\Syrm}=0$  for $k=1,\ldots,K$ as well. Thus we get $\Delta_{\yrm}^{\ell,m}(\bmu)=0$ so that \eqref{PropEst-a} follows. Otherwise, using the reverse triangle inequality and \eqref{2.5:saturation_assumption} we get
    \begin{align*}
        \frac{\sum_{k=1}^K\alpha_k\,{\|\yrm^{k,\ell}(\bmu)-\yrm^{k,m}(\bmu)\|}^2_{\Syrm}}{\sum_{k=1}^K\alpha_k\,{\|\yrm^{k}(\bmu)-\yrm^{k,\ell}(\bmu)\|}^2_{\Syrm}}
        &\ge\frac{\sum_{k=1}^K\alpha_k\,{\|\yrm^{k}(\bmu)-\yrm^{k,\ell}(\bmu)\|}^2_{\Syrm}-\sum_{k=1}^K\alpha_k\,{\|\yrm^{k}(\bmu)-\yrm^{k,m}(\bmu)\|}^2_{\Syrm}}{\sum_{k=1}^K\alpha_k\,{\|\yrm^{k}(\bmu)- \yrm^{k,\ell}(\bmu)\|}^2_{\Syrm}} \\
        &\ge 1-\frac{\sum_{k=1}^K\alpha_k\,{\|\yrm^{k}(\bmu)-\yrm^{k,m}(\bmu)\|}^2_{\Syrm}}{\sum_{k=1}^K\alpha_k\,{\|\yrm^{k}(\bmu)-\yrm^{k,\ell}(\bmu)\|}^2_{\Syrm}}=1-\frac{E^m_\yrm(\bmu)^2}{E^\ell_\yrm(\bmu)^2}\ge 1-\sigma_\yrm.
    \end{align*}
    On the other hand, using the triangle inequality and \eqref{2.5:saturation_assumption},
    \begin{align*}
        \frac{\sum_{k=1}^K\alpha_k\,{\|\yrm^{k,\ell}(\bmu)-\yrm^{k,m}(\bmu)\|}^2_{\Syrm}}{\sum_{k=1}^K\alpha_k\,{\|\yrm^{k}(\bmu)-\yrm^{k,\ell}(\bmu)\|}^2_{\Syrm}}
        &\le\frac{\sum_{k=1}^K\alpha_k\,{\|\yrm^{k}(\bmu)-\yrm^{k,\ell}(\bmu)\|}^2_{\Syrm}+\sum_{k=1}^K\alpha_k\,{\|\yrm^{k}(\bmu)-\yrm^{k,m}(\bmu)\|}^2_{\Syrm}}{\sum_{k=1}^K\alpha_k\,{\|\yrm^{k}(\bmu)-\yrm^{k,\ell}(\bmu)\|}^2_{\Syrm}} \\
        &\le 1 +\frac{\sum_{k=1}^K\alpha_k\,{\|\yrm^{k}(\bmu)-\yrm^{k,m}(\bmu)\|}^2_{\Syrm}}{\sum_{k=1}^K\alpha_k\,{\|\yrm^{k}(\bmu)-\yrm^{k,\ell}(\bmu)\|}^2_{\Syrm}}=1+\frac{E^m_\yrm(\bmu)^2}{E^\ell_\yrm(\bmu)^2}\le 1+\sigma_\yrm,
    \end{align*}
    which gives \eqref{PropEst-a}.
\end{proof}

Next, we define the hierarchical error estimators in $\bmu\in\Pad$ as
\begin{equation}
    \label{EstimateEff}
    \Delta_\yrm^\ell(\bmu)=\Delta_\yrm^\ell(\bmu;m,\sigma_\yrm) = \frac{\Delta_\yrm^{\ell,m}(\bmu)}{\sqrt{1-\sigma_\yrm}}
    \quad\text{and}\quad
    \Delta_\qrm^\ell(\bmu)=\Delta_\qrm^\ell(\bmu;m,\sigma_\qrm) = \frac{\Delta_\qrm^{\ell,m}(\bmu)}{\sqrt{1-\sigma_\qrm}}.
\end{equation}

\begin{rem}
    The effectivities of the estimate in the parameter $\bmu\in\Pad$ are defined as
    \begin{equation}
        \label{3.3:max_efficiency}
        \eta_\yrm(\bmu):=\frac{\Delta_\yrm^\ell(\bmu)}{E_\yrm^\ell(\bmu)}
        \quad\text{and}\quad
        \eta_\qrm(\bmu):=\frac{\Delta_\qrm^\ell(\bmu)}{E_\qrm^\ell(\bmu)}.
    \end{equation}
    Then, we infer from \eqref{3.3:max_efficiency}, \eqref{PropEst} and \eqref{EstimateEff} that
    \begin{equation}
        \label{3.3:approx_max_efficiency}
        1\le\eta_\yrm(\bmu)\le\bar\eta_\yrm:=\sqrt{\frac{1+\sigma_\yrm}{1-\sigma_\yrm}}
        \quad\text{and}\quad
        1\le\eta_\qrm(\bmu)\le\bar\eta_\qrm:=\sqrt{\frac{1+\sigma_\qrm}{1-\sigma_\qrm}}
        \qquad\text{for all }\bmu\in\Pad.
    \end{equation}
    Hence we observe that the closer $\sigma_\yrm$ (respectively $\sigma_\qrm$) is to $0$, the more accurate the estimate $\Delta^\ell_\yrm(\bmu)$  (resp. $\Delta^\ell_\qrm(\bmu)$) is for any $\bmu\in\Pad$.\hfill$\Diamond$
\end{rem}

There are two important choices to be made: the first is how to compute $\sigma_\yrm$ and $\sigma_\qrm$, and the second is how to choose the RB spaces $\Vm$ and $\Vcm$. For the first issue, we compute approximations $\tilde\sigma_\yrm$ and $\tilde\sigma_\qrm$ on a training set, namely
\begin{equation}
    \label{3.3:evaluation_sigmas}
    \sigma_\yrm\approx\tilde\sigma_\yrm:=\max_{\bmu\in\Ptr}\frac{E^m_\yrm(\bmu)^2}{E^\ell_\yrm(\bmu)^2}
    \quad\text{and}\quad
    \sigma_\qrm\approx\tilde\sigma_\qrm:=\max_{\bmu\in\Ptr} \frac{E^m_\qrm(\bmu)^2}{E^\ell_\qrm(\bmu)^2}.
\end{equation}
Let us observe that the computation of \eqref{3.3:evaluation_sigmas} can be done efficiently by parallelization.

Next, the second question is answered by defining $\Vm$ (resp. $\Vcm$) as an expansion of $\Vl$ (resp. $\Vcl$), namely $\Vl\subset\Vm$ and $\Vcl\subset\Vcm$. In particular, given $\Vl$ and $\Vcl$, we will find $\tilde{V}^m\perp\Vl$ and $\tilde{V}^m_\circ\perp\Vcl$ and then define
\begin{equation}
    \label{3.3:expansion}
    \Vm=\Vl\oplus\tilde V^m
    \qquad\text{and}\qquad
    \Vcm=\Vcl\oplus\tilde V^m_\circ,
\end{equation}
where $\oplus$ is the direct sum (cf. e.g., \cite{BS93}). The greedy algorithm for the evaluation of the RB spaces is shown in Algorithm \ref{3.3:algorithm_RBHE}.
The input of the algorithm are the tolerance $\tau>0$, an initial parameter $\bmu=(\mu_i)_{1\le i\le 4}\in\Pad$, a training set $\Ptr$ and the maximum RB cardinality $L>0$. The maximum number of bases elements is a safeguard in case the maximum error does not go under tolerance.

\begin{algorithm}
    \caption{(Weak Greedy algorithm)}
    \label{3.3:algorithm_RBHE}
	\begin{algorithmic}[1] 
        \REQUIRE $\tau>0$, $\hat\bmu\in\Pad$, $\Ptr\subset\Pad$, $L>0$;
        \STATE Create RB spaces $\Vl$, $\Vcl$, $\tilde{V}^m$ and $\tilde{V}^m_\circ$ in $\hat\bmu$;\label{3.3alg:line_1}
        \STATE $\Vm\gets\Vl\oplus\tilde V^m$ and $\Vcm\gets\Vcl\oplus \tilde V^m_\circ$ as in \eqref{3.3:expansion};\label{3.3alg:line_2}
        \STATE Evaluate $\tilde\sigma_\yrm$ and $\tilde\sigma_\qrm$ as in \eqref{3.3:evaluation_sigmas};\label{3.3alg:line_3}
        \WHILE{$\tilde\sigma_\yrm\ge1$ \textbf{or} $\tilde\sigma_\qrm\ge1$}\label{3.3alg:line_4}
            \STATE Enrich $\tilde{V}^m$ and/or $\tilde{V}^m_\circ$;
            \STATE Set $\Vm \gets \Vl \oplus \tilde{V}^m$ and $\Vcm \gets \Vcl \oplus \tilde{V}^m_\circ$;\label{3.3alg:line_6}
            \STATE Evaluate $\tilde\sigma_\yrm$ and $\tilde\sigma_\qrm$ as in \eqref{3.3:evaluation_sigmas};\label{3.3alg:line_7}
       \ENDWHILE
        \STATE Evaluate $\hat{\bmu}=\argmax_{\bmu\in\Ptr}e^\ell(\bmu):=(\Delta^\ell_\yrm(\bmu)+\Delta^\ell_\qrm(\bmu))/2$ and $\hat{e}=e(\hat\bmu)$;\label{3.3alg:line_9}
        \WHILE{$\hat{e}>\tau$ \textbf{and} $\elly+\ellq<L$}
            \IF{$\Delta^\ell_\yrm(\hat\bmu)>\tau$}
                \STATE Enrich $\Vl$ in $\hat\bmu$;
            \ENDIF
            \IF{$\Delta^\ell_\qrm(\hat\bmu)>\tau$}\label{3.3alg:line_13}
                \STATE Enrich $\Vcl$ in $\hat\bmu$;
            \ENDIF
            \STATE Set $\Vm \gets \Vl \oplus \tilde{V}^m$ and $\Vcm \gets \Vcl \oplus \tilde{V}^m_\circ$;
            \WHILE{$\tilde\sigma_\yrm\ge1$ \textbf{or} $\tilde\sigma_\qrm\ge1$}\label{3.3alg:line_18}
                \STATE Enrich $\tilde{V}^m$ and $\tilde{V}^m_\circ$;
                \STATE Set $\Vm \gets \Vl \oplus \tilde{V}^m$ and $\Vcm \gets \Vcl \oplus \tilde{V}^m_\circ$;
                \STATE Evaluate $\tilde\sigma_\yrm$ and $\tilde\sigma_\qrm$ as in \eqref{3.3:evaluation_sigmas};\label{3.3alg:line_21}
            \ENDWHILE
            \STATE Evaluate $\hat{\bmu}=\argmax_{\bmu\in\Ptr}e^\ell(\bmu):=(\Delta^\ell_\yrm(\bmu)+\Delta^\ell_\qrm(\bmu))/2$ and $\hat{e}=e(\hat\bmu)$;\label{3.3alg:line_23}
        \ENDWHILE
     \end{algorithmic}
\end{algorithm}

In line \ref{3.3alg:line_2} the computation of the initial RB spaces is described. As a first step, we create RB spaces $\Vl$ and $\Vcl$ using POD with the state variables' snapshots $\{\yrm^k(\bmu)\}_{k=1}^K$ and $\{\qrm^k(\bmu)\}_{k=1}^K$. Then, to define $\tilde{V}^m$ and $\tilde{V}^m_\circ$ we use the sensitivity variables (as suggested in \cite{HORU19}). In particular, we first evaluate the vectors $\{\mathrm{s}^k_{\yrm,i}(\bmu),\mathrm{s}^k_{\qrm,i}(\bmu)\}_{k=1}^K$ for $i=1,\dots,4$, where $\mathrm{s}^k_{\yrm,i}(\bmu)$ is the coordinate vector of FE approximation of the $i$-th sensitivity variable w.r.t. state variable $y$ evaluated at $t_k$ using parameter $\bmu$. Secondly, we orthogonalize the vectors $\{\yrm^k(\bmu),\,\mathrm{s}^k_{\yrm,i}(\bmu)\}_{k=1}^K$ (resp. $\{\qrm^k(\bmu),\,\mathrm{s}^k_{\qrm,i}(\bmu)\}_{k=1}^K$) for $i=1,\dots,4$ w.r.t. $\Vl$ (resp. $\Vcl$). Finally, we use POD on the two sets of vectors to evaluate $\tilde{V}^m$ and $\tilde{V}^m_\circ$. Then, line \ref{3.3alg:line_6} uses \eqref{3.3:expansion} to define the spaces $\Vm$ and $\Vcm$. We observe that by construction $\Vl\perp\tilde{V}^m$ and $\Vcl\perp\tilde{V}^m_\circ$, though it is possible to orthogonalize the bases to enforce this condition numerically. Furthermore, we add that at first we choose $m_\yrm=\elly+2$ and $m_\qrm=\ellq+2$, namely we construct the initial $\tilde{V}^m$ and $\tilde{V}^m_\circ$ of dimension 2.

Lines \ref{3.3alg:line_7} and \ref{3.3alg:line_21} evaluates $\tilde\sigma_\yrm$ and $\tilde\sigma_\qrm$. If $\tilde\sigma_\yrm>1$ or $\tilde\sigma_\qrm>1$ (as in line \ref{3.3alg:line_4}), it means that the saturation property is violated. This has so far not happened to us, confirming the expectation that a bigger RB space is a better approximation of the FE space for any $\bmu\in\Pad$. But in case this condition is violated, our algorithm enriches the spaces $\tilde{V}^m$ and $\tilde{V}^m_\circ$ in the parameters where such condition was violated. Enriching here means evaluating FE vectors, orthogonalizing w.r.t. $\Vm$ or $\Vcm$, and then using POD to add some new bases to them. This procedure is then applied as long as the saturation property is violated.

Lines \ref{3.3alg:line_9} and \ref{3.3alg:line_23} evaluate the error measure we want to lower. If this error is higher than a certain tolerance, it means that either $\Delta^\ell_\yrm(\hat\bmu)>\tau$ or $\Delta^\ell_\qrm(\hat\bmu)>\tau$ (or, eventually, both). Then the enrichment is done by evaluating the states' snapshots, orthogonalizing them w.r.t. the bases we already have and then using POD to get new elements. We can then add as many elements such that $\Delta^\ell_\yrm(\hat\bmu)\le\tau$ and $\Delta^\ell_\qrm(\hat\bmu)\le\tau$.

We then check $\tilde\sigma_\yrm$ and $\tilde\sigma_\qrm$ again. Since $\Vl$ and $\Vcl$ could have been enriched, we need to orthogonalize $\tilde{V}^m$ and $\tilde{V}^m_\circ$ with respect to them. It still holds that $\Vl\subset\Vm$ and $\Vcl\subset\Vcm$, but if the saturation property fails, we enforce it by enriching $\tilde{V}^m$ or $\tilde{V}^m_\circ$.

\subsection{Numerical experiments}
\label{Sec:4.2}

Let us now set $\Omega=(0,1)$ and $T=1$, discretized with linear Lagrangian elements in space on 200 spatial nodes (so that $n=201$) and the implicit Euler method for $K=201$ time steps. We choose the diffusion functions $\kappa_1(x)\equiv\kappa_2(x)\equiv1$, initial value $y_\circ(x)\equiv5$ and parameter bounds as $\muai=1$ and $\mubi=5$ for $i=1,\dots,4$. The set $\Ptr$ is chosen as a uniform grid $5\times5\times5\times5$ on $\Pad$, such that $\Ptr=\{\bmu\in\mathbb{R}^4\,|\,\mu_i\in\{1,2,3,4,5\}\text{ for }i=1,2,3,4\}$. Furthermore, we choose $\tau=10^{-4}$ and $L=50$.

\begin{figure}[t]
\centering
\begin{tabular}{rr}
    \includegraphics{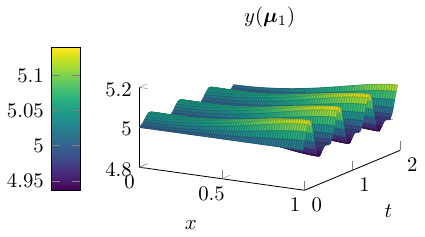}
    &
    \includegraphics{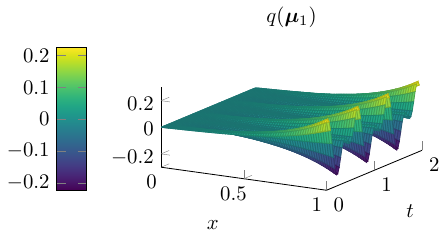}
    \\
    \includegraphics{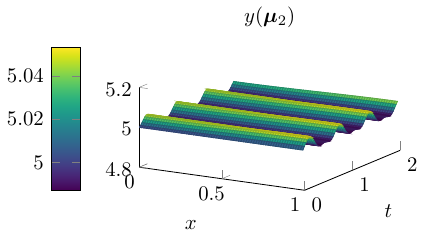}
    &
    \includegraphics{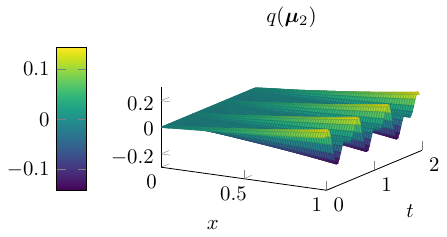}
\end{tabular}
\caption{\label{Fig:plots_different_parameters} State variables for $\bmu_1=(1,5,1,5)$ and $\bmu_2=(5,3,4,2)$.}
\end{figure}

We analyse the results of the greedy algorithm and the generated RB spaces using three different input functions, namely
\begin{align*}
    u_1(t)\equiv1,\quad u_2(t)=-\bm{1}_{[0,0.75)}(t)+\bm{1}_{[0.75,1]}(t),\quad u_3(t)=0.5\cdot\cos(10t)+0.4\cdot\sin(20t)\quad\text{for }t\in[0,T],
\end{align*}
where $\bm{1}_I(t)$ is the indicator function of interval $I$.
To give an idea of what state variables look like, we show in Figure~\ref{Fig:plots_different_parameters} approximated FE states for input function $u_3$.

We will compare the overall time of the Algorithm~\ref{3.3:algorithm_RBHE}, the number of bases generated in all spaces, the approximated values $\tilde\sigma_\yrm$ and $\tilde\sigma_\qrm$ and the respective approximated efficiency, $\tilde\eta_\yrm$ and $\tilde\eta_\qrm$ given as \eqref{3.3:approx_max_efficiency}; cf. Table~\ref{Tab:greedy_results}. Furthermore, given a set $\Pte$ of 100 random parameters in $\Pad$, we evaluate the average times of evaluating FE and RB states, maximum test errors
\begin{equation*}
    \hat{E}^\ell_\yrm = \max_{\bmu\in\Pte} E^\ell_\yrm(\bmu)
    \qquad\text{and}\qquad
    \hat{E}^\ell_\qrm = \max_{\bmu\in\Pte} E^\ell_\qrm(\bmu)
\end{equation*}
and maximum test efficiencies
\begin{equation*}
    \hat\eta_\yrm = \max_{\bmu\in\Pte} \eta_\yrm(\bmu)
    \qquad\text{and}\qquad
    \hat\eta_\qrm = \max_{\bmu\in\Pte} \eta_\qrm(\bmu),
\end{equation*}
where $\eta_\yrm(\bmu)$ and $\eta_\qrm(\bmu)$ are defined in \eqref{3.3:max_efficiency}.

\begin{table}[ht]
\begin{center}
{\small\begin{tabular}{|l|c|c|c|} 
\hline
& $u_1$ & $u_2$ & $u_3$ \\\hline
Time greedy & 3424 $s$ & 3456 $s$ & 3704 $s$ \\
Number of bases of $\Vl,\Vcl$: $\ell_\yrm$, $\ell_\qrm$ & 8, 4 & 8, 4 & 7, 4 \\
Number of bases of $\Vm,\Vcm$: $m_\yrm$, $m_\qrm$ & 10, 6 & 10, 6 & 9, 6 \\
Approximated $\sigma$'s: $\tilde\sigma_\yrm$, $\tilde\sigma_\qrm$ & 0.18, 0.46 & 0.22, 0.05 & 0.03, 0.67 \\
Approx. max. efficiency $\tilde\eta_\yrm$, $\tilde\eta_\qrm$ & 1.20, 1.64 & 1.25, 1.05 & 1.03, 2.26 \\
Avg. time FE & 1.50 $s$ & 1.56 $s$ & 1.58 $s$ \\
Avg. time RB & 0.05 $s$ & 0.05 $s$ & 0.08 $s$ \\
Maximum test errors: $\hat{E}^\ell_\yrm$, $\hat{E}^\ell_\qrm$ & 7.22e-06, 1.19e-05 & 6.31e-06, 2.28e-05 & 7.38e-06, 2.33e-05 \\
Maximum test efficiencies: $\hat\eta_\yrm$, $\hat\eta_\qrm$ & 1.12, 1.36 & 1.18, 1.03 & 1.01, 1.74 \\
\hline
\end{tabular}}
\end{center}
\caption{Comparison of the test runs for three different control inputs $u$.\label{Tab:greedy_results}}
\end{table}

\begin{figure}[ht]
\centering
\begin{tabular}{rrr}
    \includegraphics{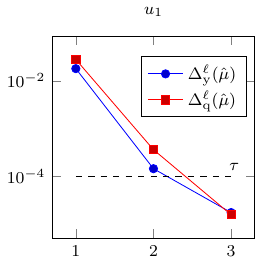}
    &
    \includegraphics{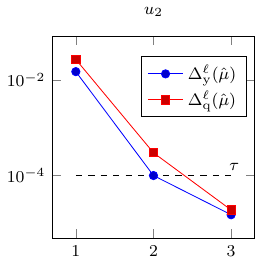}
    &
    \includegraphics{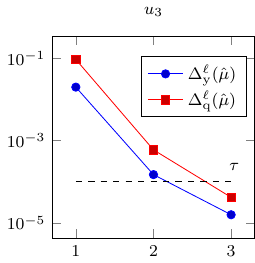}
    \\
    \includegraphics{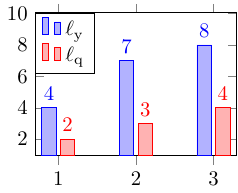}
    &
    \includegraphics{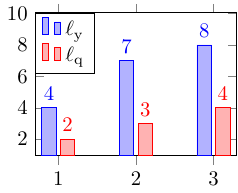}
    &
    \includegraphics{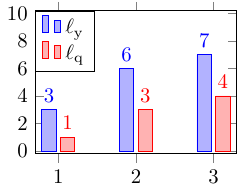}
\end{tabular}
\caption{\label{Fig:greedy}Visual comparison of the Greedy results for the three different control inputs. In both rows, the current greedy iteration is indicated on the $x$-axis: in the first row we can see how the convergence of the RB error estimators under tolerance is reached in three iterations, while in the second row we see the growing of the dimensions of the RB spaces.}
\end{figure}

The overall algorithm time does not change much with different inputs.
The evaluation of $\hat{e}$ in lines \ref{3.3alg:line_9} and \ref{3.3alg:line_23} does not take much CPU time. On the contrary, the evaluation of \eqref{3.3:evaluation_sigmas} takes necessarily more time, because the evaluation of $\tilde\sigma_\yrm$ and $\tilde\sigma_\qrm$ needs the evaluation of the FE solutions, while the evaluation of $e^\ell(\bmu)$ does not. We can see that the bigger spaces $\Vm$ and $\Vcm$ are in fact of dimensions $m_\yrm=\elly+2$ and $m_\qrm=\ellq+2$, meaning that once $\tilde{V}^m$ and $\tilde{V}^m_\circ$ are created at the beginning they are not enriched anymore, namely no more bases are added to them since the conditions on lines \ref{3.3alg:line_4} and \ref{3.3alg:line_18} of Algorithm~\ref{3.3:algorithm_RBHE} are never true. The value of $\tilde\sigma_\yrm$ and $\tilde\sigma_\qrm$ change significantly in the different tests, but in the end we see that they hold good approximated max efficiencies $\tilde\eta_\yrm$ and $\tilde\eta_\qrm$, which we can read as ``efficiency estimators'' for the test efficiencies $\hat\eta_\yrm$ and $\hat\eta_\qrm$.

We conclude with a comment on the hierarchical estimators. Even if, as we have stressed, these are just approximations of the estimators, they seem tight even with a relatively small effort: indeed, if the time of evaluation of an RB solution amounts to $5\%$ of the time evaluating a FE solution, we can expect the time to evaluate the estimator to amount to around $10\%$ of the time evaluating the true error (since the error estimator consists in evaluating two RB solutions).

\section{The parameter optimization}
\label{Sec:5}

Now we are interested to solve numerically the non-linear PDE-constrained parameter optimization problem
\begin{equation*}
    \tag{$\mathbf P$}\label{5.0:optimization_problem}
    \min J(y,q,\bmu) \quad \text{s.t.}\quad (y,q)\in\Yscr\times\Qscr \text{ is a weak solution of \eqref{2.0:coupled_system_together} and } \bmu\in\Pad.
\end{equation*}

Problem \eqref{2.1:weak_all} is uniquely solvable only locally in time. For that reason, we make use of the following hypothesis.

\begin{assum}
    \label{Assum:4}
    For given final time $T>0$ there exists a unique solution pair $(y,q)\in\Yscr\times\Qscr$ to \eqref{2.1:weak_all} for any $\bmu\in\Pad$ which is denoted as $(y(\bmu),q(\bmu))$.
\end{assum}

For $\Wscr:=L^2(0,T;H)$ the cost functional is defined as the $\Wscr$-norm of the error between the observable function $\eta:\Yscr\times\Qscr\times\Pad\to\Wscr$ and a given data function $w\in\Wscr$ that depends on $\bmu^*\in\Pad$, the underlying parameter, plus a regularization term. Then,
\begin{equation}
    J(y,q,\bmu) = \frac{\alpha^{J}}{2}\,{\|\eta(y,q,\bmu)-w\|}^2_\Wscr+\frac{\lambda}{2}\,{\|\bmu-\hat{\bmu}\|}_2^2\quad\text{for }\bmu\in\Pad,
\end{equation}
where $\alpha^J>0$, $\lambda>0$ hold and $\hat\bmu\in\Pad$ is a reference parameter. Then, setting $\hat\eta(\bmu):=\eta(y(\bmu),q(\bmu),\bmu)$, we define the reduced cost function as

\begin{equation}
    \label{4.0:reduced_cost_function}
    \hat{J}(\bmu)=\frac{\alpha^{J}}{2}\,{\|\hat\eta(\bmu)-w\|}^2_\Wscr+\frac{\lambda}{2}\,{\|\bmu-\hat{\bmu}\|}_2^2.
\end{equation}

In many applications including battery systems, we only have limited information and can only measure one of the two states. We assume our case to be similar, namely we can only measure the state $q(\bmu)$, so that $\hat\eta(\bmu)=q(\bmu)$. Finally, we rewrite the optimization problem in reduced form as
\begin{equation}
    \tag{$\hat{\mathbf{P}}$}\label{5.0:reduced_optimization_problem}
    \min\hat{J}(\bmu)\quad\text{s.t.}\quad\bmu\in\Pad.
\end{equation}

The FE and the RB approximations will be, respectively,

\begin{equation*}
    \hat{J}^h(\bmu)=\frac{\alpha^{J}}{2}\sum_{k=1}^K\alpha_k\,{\|\qrm^k(\bmu)-\wrm^k\|}_{\mathrm M_\qrm}^2+\frac{\lambda}{2}\,{\|\bmu-\hat{\bmu}\|}_2^2
\end{equation*}
and
\begin{equation*}
    \hat{J}^\ell(\bmu)=\frac{\alpha^{J}}{2}\sum_{k=1}^K\alpha_k\,{\|\qrm^{k,\ell}(\bmu)-\wrm^k\|}_{\mathrm M_\qrm}^2+\frac{\lambda}{2}\,{\|\bmu-\hat{\bmu}\|}_2^2,
\end{equation*}
where $\alpha_0,\dots,\alpha_K$ are the trapezoidal weights for the discretization of the temporal integral and $\wrm^k$ is the coordinate vector of the $H$-projection of
\begin{align*}
w^k=\frac{1}{\Delta t}\int_{t_k-\Delta t/2}^{t_k+\Delta t/2}w(s)\,\mathrm ds\quad\text{for }k=1,\ldots,K\text{ and }t_k=k\Delta t
\end{align*}
onto $\Vc$.

\begin{rem}
    The evaluation of $\hat{J}^\ell$ is online efficient, since pre-evaluating
    \begin{equation*}
        \mathrm{r}_1^k=\Psiq^\top\mathrm{M}_\qrm\wrm^k\in\mathbb{R}^\ellq
        \qquad\text{and}\qquad
        \mathrm{r}_2^k=\wrm^{k\top}\mathrm{M}_\qrm\wrm^k\in\mathbb{R}
        \qquad \text{for } k=1,\dots,K,
    \end{equation*}
    then
    \begin{equation*}
        {\|\qrm^{k,\ell}(\bmu)-\wrm^k\|}_{\mathrm{M}_\qrm}^2=(\Psi_\qrm\hat\qrm^k(\bmu)-\wrm^k)^\top\mathrm{M}_\qrm(\Psi_\qrm\hat\qrm^k(\bmu)-\wrm^k)=\hat\qrm^k(\bmu)^\top\mathrm{M}_\qrm^\ell \hat\qrm^k(\bmu) - 2 \hat\qrm^k(\bmu)^\top\mathrm{r}_1^k+\mathrm{r}_2^k.
    \end{equation*}
\end{rem}

Next, we show an approximated a-posteriori error estimator for the reduced cost using the hierarchical error estimator $\Delta^\ell_\qrm(\bmu)$.

\begin{prop}
    For all $\bmu\in\Pad$, the following estimate on the RB error of the cost functional holds:
    \begin{equation*}
        \left|\hat{J}^h(\bmu)-\hat{J}^\ell(\bmu)\right|\le\Delta^\ell_J(\bmu),
    \end{equation*}
    where
    \begin{equation}
        \label{5.0:Delta_J}
        \Delta^\ell_J(\bmu)=\frac{\alpha^JL^4}{2\pi^4}\Delta^\ell_\qrm(\bmu)^2+\frac{\alpha^JL^2}{\pi^2}\Delta^\ell_\qrm(\bmu)\sqrt{\tilde{J}^\ell(\bmu)}
    \end{equation}
and $\tilde{J}^\ell(\bmu)=\sum_{k=1}^K\alpha_k\,\|\qrm^{k,\ell}(\bmu)-\wrm^k\|_{\mathrm{M}_\qrm}^2$.
\end{prop}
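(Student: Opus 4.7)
The plan is to bound $|\hat{J}^h(\bmu)-\hat{J}^\ell(\bmu)|$ by expanding the difference of two squared $\mathrm M_\qrm$-residuals, then applying Cauchy--Schwarz and Poincar\'e to rewrite everything in terms of the $\Sqrm$-norm controlled by the hierarchical estimator $\Delta_\qrm^\ell(\bmu)$. The regularization part $\tfrac{\lambda}{2}\|\bmu-\hat{\bmu}\|_2^2$ is identical in $\hat{J}^h$ and $\hat{J}^\ell$ and therefore cancels, so only the tracking term has to be estimated.

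First I would apply the polarization identity $\|a\|^2-\|b\|^2=\|a-b\|^2+2\langle a-b,b\rangle$ with $a=\qrm^k(\bmu)-\wrm^k$ and $b=\qrm^{k,\ell}(\bmu)-\wrm^k$ in the $\mathrm M_\qrm$-inner product to rewrite each time-step summand of $\hat{J}^h(\bmu)-\hat{J}^\ell(\bmu)$ as
\begin{equation*}
\|\qrm^k(\bmu)-\qrm^{k,\ell}(\bmu)\|_{\mathrm M_\qrm}^2+2\,\langle\qrm^k(\bmu)-\qrm^{k,\ell}(\bmu),\,\qrm^{k,\ell}(\bmu)-\wrm^k\rangle_{\mathrm M_\qrm}.
\end{equation*}
Taking absolute values and using the triangle inequality then yields one quadratic and one (signed) linear contribution in the RB state error. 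The linear cross term is treated by spatial Cauchy--Schwarz in the $\mathrm M_\qrm$-inner product, followed by the discrete Cauchy--Schwarz in time against the trapezoidal weights $\alpha_k$; this produces the factor $\sqrt{\tilde{J}^\ell(\bmu)}$ on the right.

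The main step is converting the $\mathrm M_\qrm$-norm of the RB state error into the $\Sqrm$-norm in which $\Delta_\qrm^\ell$ lives. Since every coefficient vector corresponds to a function in $\Vch\subset\Vc$, Poincar\'e's inequality \eqref{2.1:poincare_inequality} applied at the discrete level gives $\|v\|_{\mathrm M_\qrm}\le c_{\mathsf P}\,\|v\|_{\Sqrm}$, and on $\Omega=(0,L)$ the constant works out so that it contributes the factor $L^2/\pi^2$ (once on the linear cross term, squared on the quadratic one). Combining this with $E_\qrm^\ell(\bmu)\le\Delta_\qrm^\ell(\bmu)$ from \eqref{PropEst-b} gives exactly
\begin{equation*}
|\hat{J}^h(\bmu)-\hat{J}^\ell(\bmu)|\le\frac{\alpha^J L^4}{2\pi^4}\,\Delta_\qrm^\ell(\bmu)^2+\frac{\alpha^J L^2}{\pi^2}\,\Delta_\qrm^\ell(\bmu)\,\sqrt{\tilde{J}^\ell(\bmu)}=\Delta_J^\ell(\bmu).
\end{equation*}

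I do not anticipate any genuine obstacle; the argument is essentially an $(a+b)^2$ identity together with Cauchy--Schwarz in space and time and one Poincar\'e step. The only conceptual point worth emphasizing is that the hierarchical estimator $\Delta_\qrm^\ell$ controls a $\Vc$-type (gradient) norm of the state error whereas the tracking cost penalizes the output in an $L^2$-type norm, so the discrete Poincar\'e inequality is precisely what couples these two scales and is what makes the $L/\pi$ powers appear.
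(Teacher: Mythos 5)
Your proposal is correct and follows essentially the same route as the paper: the polarization identity you invoke is exactly the paper's add-and-subtract of the cross term $\langle\qrm^k(\bmu)-\wrm^k,\qrm^{k,\ell}(\bmu)-\wrm^k\rangle_{\mathrm M_\qrm}$, and the subsequent Cauchy--Schwarz in space and time, the discrete Poincar\'e step converting the $\mathrm M_\qrm$-norm to the $\Sqrm$-norm, and the final use of $E^\ell_\qrm(\bmu)\le\Delta^\ell_\qrm(\bmu)$ with $c_{\mathsf P}=(L/\pi)^2$ all match the paper's argument. No gaps.
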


\begin{proof}
As a first step, the following equality holds
\begin{equation}
    \label{5.0:proof_DeltaJ_1}
    \hat{J}^h(\bmu)-\hat{J}^\ell(\bmu)=\frac{\alpha^{J}}{2}\sum_{k=1}^K\alpha_k\,\left(\langle \qrm^k(\bmu)-\wrm^k,\qrm^k(\bmu)-\wrm^k\rangle_{\mathrm{M}_\qrm}-\langle\qrm^{k,\ell}(\bmu)-\wrm^k,\qrm^{k,\ell}(\bmu)-\wrm^k\rangle_{\mathrm{M}_\qrm}\right).
\end{equation}
Then, adding and subtracting the term $\langle\qrm^k(\bmu)-\wrm^k,\qrm^{k,\ell}(\bmu)-\wrm^k\rangle_{\mathrm{M}_\qrm}$ to each element in the sum in \eqref{5.0:proof_DeltaJ_1} we can write
\begin{align*}
    &\langle\qrm^k(\bmu)-\wrm^k,\qrm^k(\bmu)-\wrm^k\rangle_{\mathrm{M}_\qrm}-\langle\qrm^{k,\ell}(\bmu)-\wrm^k,\qrm^{k,\ell}(\bmu)-\wrm^k\rangle_{\mathrm{M}_\qrm}\\
    &=\langle\qrm^k(\bmu)-\wrm^k,\qrm^k(\bmu)-\qrm^{k,\ell}(\bmu)\rangle_{\mathrm{M}_\qrm}+\langle\qrm^k(\bmu)-\qrm^{k,\ell}(\bmu),\qrm^{k,\ell}(\bmu)-\wrm^k\rangle_{\mathrm{M}_\qrm}\\
    &=\langle\qrm^k(\bmu)-\qrm^{k,\ell}(\bmu),\qrm^k(\bmu)+\qrm^{k,\ell}(\bmu)-2\wrm^k\rangle_{\mathrm{M}_\qrm}\\
    &=\langle\qrm^k(\bmu)-\qrm^{k,\ell}(\bmu),\qrm^k(\bmu)-\qrm^{k,\ell}(\bmu)\rangle_{\mathrm{M}_\qrm}+2\langle\qrm^k(\bmu)-\qrm^{k,\ell}(\bmu),\qrm^{k,\ell}(\bmu)-\wrm^k\rangle_{\mathrm{M}_\qrm}.
\end{align*}
Hence,
\begin{equation}
    \label{5.0:proof_DeltaJ_2}
    |\hat{J}^h(\bmu)-\hat{J}^\ell(\bmu)|\le\frac{\alpha^J}{2}\sum_{k=1}^K\alpha_k\,\|\qrm^k(\bmu)-\qrm^{k,\ell}(\bmu)\|_{\mathrm{M}_\qrm}^2+\alpha^J\sum_{k=1}^K\alpha_k\,\left|\langle\qrm^k(\bmu)-\qrm^{k,\ell}(\bmu),\qrm^{k,\ell}(\bmu)-\wrm^k\rangle_{\mathrm{M}_\qrm}\right|.
\end{equation}
Using \eqref{2.1:poincare_inequality}, \eqref{PropEst-b} and \eqref{EstimateEff} we can estimate
\begin{equation}
    \label{5.0:proof_DeltaJ_3}
    \sum_{k=1}^K\alpha_k\,\|\qrm^k(\bmu)-\qrm^{k,\ell}(\bmu)\|_{\mathrm{M}_\qrm}^2\le c_\mathsf{P}^2 E^\ell_\qrm(\bmu)^2\le c_\mathsf{P}^2\Delta^\ell_\qrm(\bmu)^2.
\end{equation}
On the other hand, using the Cauchy-Schwarz inequality and \eqref{5.0:proof_DeltaJ_3} we get
\begin{equation}
    \begin{aligned}
        \label{5.0:proof_DeltaJ_4}
        &\sum_{k=1}^K\alpha_k\,|\langle\qrm^k(\bmu)-\qrm^{k,\ell}(\bmu),\qrm^{k,\ell}(\bmu)-\wrm^k\rangle_{\mathrm{M}_\qrm}|\\
        &\qquad\le\left(\sum_{k=1}^K\alpha_k\,\|\qrm^k(\bmu)-\qrm^{k,\ell}(\bmu)\|_{\mathrm{M}_\qrm}^2\right)^{1/2}\left(\sum_{k=1}^K\alpha_k\,\|\qrm^{k,\ell}(\bmu)-\wrm^k\|_{\mathrm{M}_\qrm}^2\right)^{1/2}\le c_\mathsf{P} \Delta^\ell_\qrm(\bmu)\sqrt{\tilde{J}^\ell(\bmu)}.
    \end{aligned}
\end{equation}
Inserting \eqref{5.0:proof_DeltaJ_3} and \eqref{5.0:proof_DeltaJ_4} into \eqref{5.0:proof_DeltaJ_2} we get
\begin{equation*}
    |\hat{J}^h(\bmu)-\hat{J}^\ell(\bmu)|\le\frac{\alpha^J c_\mathsf{P}^2}{2}\Delta^\ell_\qrm(\bmu)^2+\alpha^J c_\mathsf{P}\Delta^\ell_\qrm(\bmu)\sqrt{\tilde{J}^\ell(\bmu)}.
\end{equation*}
Using $c_\mathsf{P}=(L/\pi)^2$ the optimal Poincaré constant in $[0,L]$, we finally get \eqref{5.0:Delta_J}.
\end{proof}

\subsection{The optimization algorithm}
\label{Sec:5.1}

We solve the optimization problem \eqref{5.0:reduced_optimization_problem} in a trust-region (TR) framework using the RB model as a surrogate model. This was done in the recent work \cite{QGVW17,KMOSV21,BKMOSV22,PSV22}.

The TR optimization algorithm computes iteratively a first-order critical point of \eqref{5.0:reduced_optimization_problem}. At each iteration $i \ge 0$ of the optimization algorithm, a cheaply computable model $m^{(i)}$ (approximation of the reduced cost) is used to accurately represent the function $\hat{J}^h$ in a reasonable neighborhood of $\bmu^{(i)}$, called \textit{trust region} $\mathcal{T}(\delta^{(i)}) = \{ \bmu : \| \bmu - \bmu^{(i)} \|_2 \le \delta^{(i)} \} $, where $\delta^{(i)}$ is called \textit{TR radius}. The TR method finds the next iteration $\bmu^{(i+1)}$ of the optimization algorithm by solving the problem
\begin{equation}
    \label{5.1:TR_problem_general}
    \min_{s \in \mathbb R^d} m^{(i)} (s)\quad\text{s.t.}\quad {\| s \|}_2 \le \delta^{(i)}, \,\bmu^{(i)} + s \in\Pad.
\end{equation}

For $\bmu = \bmu^{(i)} + s$, the RB version of \eqref{5.1:TR_problem_general} is
\begin{equation}
    \label{eq:TR_subproblem}
    \min_{\bmu\in\Pad}\hat{J}^{\ell,(i)}(\bmu)\quad\text{s.t.}\quad \mathfrak q^{(k)}(\bmu):=\frac{\Delta^{\ell,(i)}_{J}(\bmu)}{\hat{J}^{\ell,(i)}(\bmu)}\le\delta^{(i)}.
\end{equation}
Here and whenever some quantity depends on the iteration $i$, we show it in the superscript $(i)$, like the RB cost $\hat J^{\ell,(i)}$. The ratio $q^{(i)}$ quantifies the accuracy of the RB and is used to define the TR. As we have seen in \eqref{5.0:Delta_J}, the value $\Delta^{\ell,(i)}_J(\bmu)$ is dependent on $\sigma_\qrm$, which is evaluated as $\sigma_\qrm=\sigma_\qrm^{(i)}=E^m_\qrm(\bmu^{(i)})^2/E^\ell_\qrm(\bmu^{(i)})^2$ at each iteration.

Whether the solution $\tilde\bmu$ of \eqref{eq:TR_subproblem} is accepted as the next step of the optimization algorithm, is decided based on the error-aware sufficient decrease condition (EASDC) introduced in \cite[Formula~(3.9)]{QGVW17}, i.e., if
\begin{equation}
    \label{5.1:EASDC}
    \hat{J}^{\ell,(i+1)}(\bmu^{(i+1)})\le\hat{J}^{\ell,(i)}(\bmu^{(i)}_{AGC}),
\end{equation}
where $\bmu^{(i)}_{AGC}$ is the approximated generalized Cauchy (AGC) point, defined as the steepest descent method solution in the initial direction $-\nabla\hat{J}^{\ell,(i)}(\bmu^{(i)})$. Since $\hat J^{\ell,(i)}$ refers to the reduced model at iteration $k$, while $\hat J^{\ell,(i+1)}$ refers instead to the model after the $(k+1)$-th (eventual) enrichment, the condition is not effortless to verify, and instead other similar conditions are tested, cf. \cite{QGVW17,KMOSV21,BKMOSV22}.

If the candidate is rejected, we can expect the surrogate model to be not accurate enough, and hence enrich the RB models or reduce the TR radius. On the other hand, if it is accepted, we can even decide that there is no need to enrich the model, as explained in \cite{BKMOSV22}. The algorithm then stops when $\|\bmu^{(i+1)}-\mathcal{P}_\Pad(\bmu^{(i+1)}-\nabla\hat{J}^h(\bmu^{(i+1)}))\|_2\le\varepsilon_{tr}$, where $\varepsilon_{tr}=10^{-5}$ is the overall tolerance.

\begin{rem}
    Following \cite{BKMOSV22,QGVW17} one can prove the convergence of our optimization method by assuming additional hypotheses. In fact, we need uniform Lipschitz  continuity for the reduced cost $\hat J^{\ell,(k)}$, the mapping $\bmu\mapsto\mathfrak q^{(i)}$ has to be uniformly continuous, \eqref{eq:TR_subproblem} should admit at least one optimal solution satisfying \eqref{5.1:EASDC} and the gradient of the reduced cost functional has to fulfill a so-called \emph{Carter condition} \cite{Car91}.\hfill$\Diamond$
\end{rem}

\subsection{Numerical experiments (cont'd)}
\label{Sec:5.2}

We will now show the accuracy and the speed-up of the TR optimization algorithm using the hierarchical error estimation. Let $\Omega=(0,1)$, $T=2$, $\kappa_1(x)\equiv\kappa_2(x)\equiv1$, $y_\circ(x)\equiv5$ and $\Pad$ defined as in Section~\ref{Sec:4.2}. Furthermore, let $\alpha^J=10^5 $, $\lambda=10^{-7}$, $\bmu^{(0)}=\hat\bmu=(3,3,3,3)$ (i.e. the middle point of $\Pad$) and let $u(t)=-3\cdot\bm{1}_{[0,4/3)}(t)+3\cdot\bm{1}_{[4/3,2]}(t)$. As discretization dimensions we choose $n=201$ (linear Lagrangian elements on 200 spatial nodes) and $K=201$, and as data we use a virtual noisy measurement of the $q$ state variable corresponding to the hidden parameter $\bmu^*=(2,3,4,5)$, namely
\begin{equation*}
    \wrm^k_j = \qrm^k_j(\bmu^*) + \varepsilon^k_j, \qquad\text{where } \varepsilon^k_j\sim\mathcal{N}(0,\sigma_{\mathsf{d}}^2) \text{ for any } j=1,\dots,n,\ k=1,\dots,K,
\end{equation*}
where $\sigma_{\mathsf{d}}^2=10^{-3}$ is the simulated measurement variance.

\begin{figure}[b]
\centering
\begin{tabular}{rr}
    \includegraphics{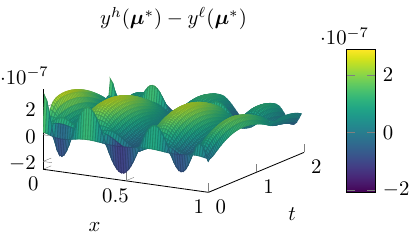}
    &
    \includegraphics{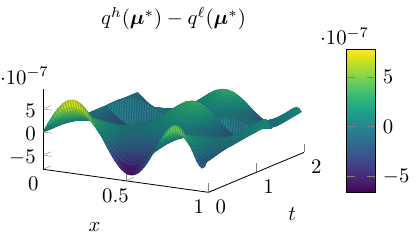}
\end{tabular}
\caption{\label{Fig:errors}Errors between FE and RB solutions for $\bmu^*=(2,3,4,5)$ and $u(t)=-3\cdot\bm{1}_{[0,4/3)}(t)+3\cdot\bm{1}_{[4/3,2]}(t)$ using linear Lagrangian elements.}
\end{figure}

Our computations are done in Python and the results of the TR optimization are compared with the results of the full-order (FO) optimization problem solved with the function \texttt{fmin\_l\_bfgs\_b} from the \texttt{scipy.optimize} library. The results are shown in Table~\ref{Tab_Results_1}. The speed-up of the TR optimization algorithm in the overall CPU time is 5.3, and we can see how the number of iterations and FO evaluations are much smaller. In the last two columns we can see the norms of the absolute and relative error between the true hidden parameter and the solution of the parameter optimization algorithm, i.e. $\mathsf{e}^{\mathsf{abs}}_\bmu:=\|\bmu^*-\bmu^\mathsf{opt}\|_2$ and $\mathsf{e}^{\mathsf{rel}}_\bmu:=\|\bmu^*-\bmu^\mathsf{opt}\|_2/\|\bmu^*\|_2$. As shown, both optimization results are accurate enough.
To show the accuracy of the RB approximation, in Figure~\ref{Fig:errors} the errors between the FE and RB approximations corresponding to the underlying hidden parameter $\bmu^*$ are plotted.

\begin{table}[ht]
\begin{center}
{\small\begin{tabular}{|l|c|c|c|c|c|} 
\hline
& Time & Iterations & FO evaluations & $\mathsf{e}^{\mathsf{abs}}_\bmu$ & $\mathsf{e}^{\mathsf{rel}}_\bmu$ \\\hline
FO optimization & 155 $s$ & 33 & 40 & 0.028 & 0.0039 \\
TR-RB & \phantom{1}29 $s$ & \phantom{3}3 & \phantom{3}5 & 0.029 & 0.0039 \\
\hline
\end{tabular}}
\end{center}
\caption{\label{Tab_Results_1}Results for $\bmu^*=(2,3,4,5)$ and $u(t)=-3\cdot\bm{1}_{[0,4/3)}(t)+3\cdot\bm{1}_{[4/3,2]}(t)$ using linear Lagrangian elements.}
\end{table}

The speed-up given by the TR optimization, shown in Table~\ref{Tab_Results_1} can be even higher when the speed-up given by the evaluation of the RB solution in comparison with the FE solution is higher. For example, using quadratic Lagrangian elements (so that $n=401$) the computational time for evaluating a FO solution is a bit higher (8 seconds) and the optimization algorithm takes longer. The results are shown in Table~\ref{Tab_Results_2}. Here the speed-up with respect to the overall CPU time is about 15, and it is reasonable to believe that for more accurate discretizations it can get even better.

\begin{table}[h]
\begin{center}
{\small\begin{tabular}{|l|c|c|c|c|c|} 
\hline
& Time & Iterations & FO evaluations & $\mathsf{e}^{\mathsf{abs}}_\bmu$ & $\mathsf{e}^{\mathsf{rel}}_\bmu$ \\\hline
FO optimization & 782 $s$ & 36 & 41 & 0.016 & 0.0022 \\
TR-RB & \phantom{1}52 $s$ & \phantom{3}3 & \phantom{2}3 & 0.018 & 0.0025 \\
\hline
\end{tabular}}
\end{center}
\caption{\label{Tab_Results_2}Results for $\bmu^*=(2,3,4,5)$ and $u(t)=-3\cdot\bm{1}_{[0,4/3)}(t)+3\cdot\bm{1}_{[4/3,2]}(t)$ using quadratic Lagrangian elements.}
\end{table}

As a further example, we simply change the hidden parameter to $\bmu^*=(4,4,2,1.5)$ and use linear Lagrangian elements. As we can see in Table~\ref{Tab_Results_3}, the results do not change significantly with respect to results seen in Table~\ref{Tab_Results_1}, both in the overall time and in number of iterations.

\begin{table}[ht]
\begin{center}
{\small\begin{tabular}{|l|c|c|c|c|c|} 
\hline
& Time & Iterations & FO evaluations & $\mathsf{e}^{\mathsf{abs}}_\bmu$ & $\mathsf{e}^{\mathsf{rel}}_\bmu$ \\\hline
FO optimization & 166 $s$ & 35 & 44 & 0.0082 & 0.0013 \\
TR-RB & \phantom{1}27 $s$ & \phantom{3}4 & \phantom{3}4 & 0.0082 & 0.0013 \\
\hline
\end{tabular}}
\end{center}
\caption{\label{Tab_Results_3}Results for $\bmu^*=(4,4,2,1.5)$ and $u(t)=-3\cdot\bm{1}_{[0,4/3)}(t)+3\cdot\bm{1}_{[4/3,2]}(t)$ using linear Lagrangian elements.}
\end{table}

On the other hand, if we change the input $u$ to, say, $u(t)=0.5\cdot\cos(10t) + 0.4\cdot\sin(20t)$, then the hidden parameter (in this example again $\bmu^*=(2,3,4,5)$) is not well approximated both for the FO and the reduced-order approximations. But nonetheless, the convergence time is much faster; cf. Table~\ref{Tab_Results_4}, and the optimization algorithm falls into a local minimum much faster.

\begin{table}[ht]
\begin{center}
{\small\begin{tabular}{|l|c|c|c|c|c|} 
\hline
& Time & Iterations & FO evaluations & $\mathsf{e}^{\mathsf{abs}}_\bmu$ & $\mathsf{e}^{\mathsf{rel}}_\bmu$ \\\hline
FO optimization & 45 $s$ & 12 & 13 & 1.00 & 0.13 \\
TR-RB & \phantom{1}9 $s$ & \phantom{3}2 & \phantom{3}3 & 1.00 & 0.13 \\
\hline
\end{tabular}}
\end{center}
\caption{\label{Tab_Results_4}Results for $\bmu^*=(2,3,4,5)$ and $u(t)=0.5\cdot\cos(10t) + 0.4\cdot\sin(20t)$ using linear Lagrangian elements.}
\end{table}

\section{Conclusions}
\label{Sec:6}

In this paper, we have considered a parameter-dependent coupled elliptic-parabolic problem. First, we have proved that a unique weak solution exists locally in time for some assumptions regarding the coupling non-linearity, the parameter space, the initial condition, and the control function (cf. Assumption~\ref{2.0:assumption_1}). Then, we have defined a full-order discretization, sufficiently accurate but expensive to solve. For this reason, the reduced basis (RB) approximation was defined.  Due to the non-linear nature of the problem, a rigorous, online-efficient error estimate is not computable. On the other hand, we have defined approximated hierarchical error estimators for the state variables and for a pretty general quadratic cost functional.

Through a weak greedy algorithm, we have built an RB model in the whole parameter space and found approximated error estimates. Later we have shown the accuracy and efficiency of such estimates on a random set of parameters. Consequently,  we have used the error estimates in a trust-region (TR) optimization framework, based on the recent work of RB-TR algorithms \cite{QGVW17,KMOSV21,BKMOSV22,PSV22}. The numerical results show a significant speed-up without sacrificing the accuracy of the optimized parameters.

Let us mention that our numerical experiments carried out in Section~\ref{Sec:5.2} show that the choice of the input function $u$ influences the parameter optimization with respect to identifiability. In \emph{optimal input design} the goal is to find the ``best'' input function for the parameter optimization; see, e.g., \cite{GP77,ADT07,KKBS04}. This is a possible future research direction; cf. \cite{PSV22}.

\setcounter{section}{0}
\renewcommand*{\thesection}{\Alph{section}}
\section{Appendix}
\label{Sec:A}

\subsection{Proof of Theorem~\ref{2.2:thm_exist_C_est}}
\label{Sec:A.1}

We show the existence of the solution of \eqref{2.2:elliptic_state_weak} by a fixed point argument, and the $\Cscrc$-estimate is obtained during the proof. For $\delta\in[0,1]$, f.a.a. $t\in[0,T]$, given $y\in\Yscr^T_M$, $u\in\Uad$ and $v\in\Cscrc$, let us consider the equation 
\begin{equation}
    \label{A.1:weak}
    \mu_3 \int_\Omega \kappa_2(x) q_x(t,x) \varphi'(x) \,\mathrm dx + \mu_4\delta\int_\Omega f(y(t,x),v(x))\varphi(x)\mathrm dx= u(t) \varphi(L)\quad\text{for all }\varphi\in\Vc.
\end{equation}
F.a.a. $t\in[0,T]$ we also introduce the solution operator
\begin{equation*}
    \mathcal T_t:[0,1] \times \Cscrc \to \Cscrc,
\end{equation*}
which maps $(\delta,v)$ to $q(t)$ as the solution of \eqref{A.1:weak} using $y(t)$ and $u(t)$. Then, using Leray-Schauder principle (cf., e.g., \cite[p.~189]{Fri64}) we prove that there exists a fixed point $q(t)$ of $\mathcal{T}_t(1,\cdot):\Cscrc\to\Cscrc$ f.a.a. $t\in[0,T]$.
\begin{description}
\item[Wellposedness of $\mathcal T_t$] Due to the fact that $y(t) \in C(\overline\Omega)$, $v\in C_\circ(\overline\Omega)$, and that $f$ is continuous, it can be easily be shown that $\mu_4 \delta f(y(t),v)\in C(\overline\Omega)$. Due to Assumption~\ref{2.0:assumption_1}-4) we can apply the Lax-Milgram theorem to derive the existence of a unique weak solution  $q(t)\in\Vc\hookrightarrow \Cscrc$ to \eqref{A.1:weak} satisfying
\begin{equation}
    {\|q(t)\|}_{C(\overline\Omega)}\le c_{\mathsf e}\|q(t)\|_\Vc\le c_1(M)\left(\|v\|_{C(\overline\Omega)}+|u(t)|\right),
\end{equation}
where the constant $c_1(M)$ does not depend on $v$, $y(t)$ or $u(t)$ but it does depend on $M$.
\item[Continuity of $\mathcal T_t(\delta,\cdot):\Cscrc\to\Cscrc$ for any $\delta\in\text{[0,1]}$ ] For any $v_1,v_2\in\Cscrc$ we set $q_1(t):=\mathcal T_t(\delta,v_1)$ and $q_2(t):=\mathcal T_t(\delta,v_2)$. Due to \eqref{2.1:c_embedding} and $\bmu\in\Pad$ we can write that 
\begin{align*}
    {\|q_1(t)-q_2(t)\|}_{C(\overline\Omega)}&\le c_{\mathsf e}{\|q_1(t)-q_2(t)\|}_\Vc\\
    &\le c_2\delta\left({\|f(y(t),v_1)-f(y(t),v_2)\|}_{C(\overline\Omega)} \right)\\
    &\le c_3(M)\left({\|\sinh(v_1)-\sinh(v_2)\|}_{C(\overline\Omega)}\right),
\end{align*}
where $c_2$ and $c_3(M)$ do not depend on $y(t)$, $v$ or $u(t)$. Thus the claims follow by the continuity of $\sinh$.
\item[Uniform continuity of $\mathcal T_t(\cdot\,,v):\text{[0,1]}\to\Cscrc$ on any bounded set  $\mathscr C \subset\Cscrc$] 
Let $\delta_1,\delta_2\in[0,1]$. Then for $q_1(t)=T(\delta_1\,,v)$ and $q_2(t)=T(\delta_1\,,v)$ we have
\begin{align*}
    \|q_1(t)-q_2(t)\|_{C(\overline\Omega)}
    &\le c_{\mathsf e}\|q_1(t)-q_2(t)\|_{\Vc}\\
    &\le c_4|\delta_1-\delta_2| \|f(y(t),v)\|_{L^{\infty}(\Omega)}\\
    &\le c_5(M)|\delta_1-\delta_2| \|\sinh(v)\|_{L^{\infty}(\Omega)}, 
\end{align*}
where $c_4$ and $c_5(M)$ do not depend on $y(t)$, $v$ or $u(t)$. Since $\|\sinh(v)\|_{C(\overline\Omega)}$ is uniformly bounded on the bounded set $\mathscr{C}\subset\Cscrc$, the operator $\mathcal{T}_t(\cdot\,,v):[0,1]\to\Cscrc$ is uniformly continuous on $\mathscr C$.
\item[Compactness of $\mathcal T_t( \delta, \cdot )$ for any $\delta \in \text{[0,1]}$] This follows from the fact that  for every $\delta \in  [0,1]$ the operator $\mathcal T_t(\delta,\cdot):\Cscrc\to\Vc$ is continuous and the space $\Vc$ is compactly embedded in $\Cscrc$ (see, e.g., \cite[Theorem 7.97]{Sal16}).
\item[Uniform boundedness of $q(t)$ satisfying $\mathcal T_t(\delta,q(t))=q(t)$ for any $\delta\in\text{[0,1]}$] Let an arbitrary $\delta \in [0,1]$ be given and let us test \eqref{A.1:weak} with $\varphi=q(t)$. Then, we get
\begin{equation*}
    \mu_3\int_\Omega\kappa_2(x) |q_x(t,x)|^2\mathrm dx+\mu_4\delta\int_\Omega f(y(t,x),q(t,x))q(t,x)\mathrm dx=q(t,L)u(t).
\end{equation*}
Using $y\in\Yscr^T_M$ and the fact that $\sinh(\qrm)\qrm\ge\qrm^2$ for any $\qrm\in\mathbb{R}$, we obtain that
\begin{equation*}
    \mu_{\mathsf a,3}\ka\,{\|q_x(t)\|}^2_H+\mu_4\delta\sqrt{1/M}{\|q(t)\|}^2_H\le|q(t,L)||u(t)|.
\end{equation*}
Hence, since $|q(t,L)|\le\|q(t)\|_\Vc$ and using \eqref{2.1:c_embedding}, Young's inequality, we get the estimate \eqref{2.2:q_est_C}, namely
\begin{equation*}
    {\|q(t)\|}_{\Vc}\le c(M)|u(t)|\le c(M)c_{\Uscr} ,
\end{equation*}
where $c(M)$ does not depend on $q$ or on $\delta$.
\item[Uniqueness of the solution of $\mathcal T_t(0,q(t))=q(t)$ in $\Cscrc$] This follows from the fact that for $\delta = 0$ problem \eqref{A.1:weak} is linear and well-posed.
\item[Application of Leray-Schauder principle] Having satisfied all of the hypotheses of the Leray-Schauder principle, the existence of a fixed point $T(1,q(t))=q(t)$ in $\Cscrc$ follows.
\item[Uniqueness] Consider $q_1(t)=\mathcal T_t(1,q_1(t))$, $q_2(t)=\mathcal T_t(1,q_2(t))$ and let $\bar{q}(t)=q_1(t)-q_2(t)$. From \eqref{A.1:weak} we can get
\begin{equation*}
    \mu_3 \int_\Omega \kappa_2\bar{q}_x(t) \varphi' \,\mathrm dx + \mu_4\int_\Omega \left(f(y(t),q_1(t))-f(y(t),q_2(t))\right)\varphi\mathrm dx=0\quad\text{for all }\varphi\in\Vc.
\end{equation*}
Choosing $\varphi=\bar{q}(t)$, we get
\begin{equation*}
    \mu_3 \int_\Omega \kappa_2(x) |\bar{q}_x(t)|^2\,\mathrm dx + \mu_4\int_\Omega \left(f(y(t),q_1(t))-f(y(t),q_2(t))\right)\bar{q}(t)\mathrm dx=0.
\end{equation*}
Using the mean value theorem ($f$ is continuously differentiable with respect to the variable q), we get
\begin{equation*}
    f(y(t,x),q_1(t,x))-f(y(t,x),q_2(t,x)) = \partial_\qrm f(y(t,x),\xi(t,x))\bar{q}(t,x)\qquad\text{f.a.a. }(t,x)\in Q_T
\end{equation*}
for some $\xi(t,x)$ between $q_1(t,x)$ and $q_2(t,x)$. It follows that
\begin{equation*}
    \mu_3 \int_\Omega \kappa_2(x) |\bar{q}_x(t)|^2\,\mathrm dx + \mu_4\int_\Omega \partial_\qrm f(y(t),\xi(t))|\bar{q}(t)|^2\mathrm dx=0\qquad\text{f.a.a. }(t)\in[0,T].
\end{equation*}
By Assumption~\ref{2.2:assumption_Y_M} the value $y(t,x)$ is positive in $Q_T$. Thus,
\begin{equation*}
    \partial_\qrm f(y(t),\xi(t))=\sqrt{y(t,x)}\cosh(\xi(t,x))\ge0\qquad\text{a.e. in }Q_T.
\end{equation*}
Utilizing $\kappa_2\ge\ka>0$ on $\Omega$ we infer that $\bar{q}_x=0$ a.e. in $Q_T$. Due to $\bar{q}\in\Vc$ f.a.a. $t\in[0,T]$ we have $\bar{q}=0$ a.e. in $Q_T$, which implies the uniqueness.
\end{description}

\subsection{Proof of Theorem~\ref{thm:ExCoupledSystem}}
\label{Sec:A.2}

The proof proceeds with the fixed point argument using Schauder's fixed point theorem \cite[Theorem 11.1 and Corollary 11.2]{GT83}. In this case, we consider the following linear parabolic equation for a given $v\in\Yscr^T_M$ and $q\in \Qscr^T$ f.a.a. $t\in[0,T]$
\begin{equation}
    \label{A.2:strong}
    \left\{
    \begin{aligned}
        y_t(t,x)-\mu_1\left(\kappa_1(x) y_x(t,x)\right)_x&=\mu_2 f(v(t,x),q(t,x))&&\text{f.a.a. }(t,x) \in Q_T, \\
        y_x(t,0)=y_x(t,L)&=0&&\text{f.a.a. }t\in(0,T],\\
        y(0,x)&= y_\circ(x)&&\text{f.a.a. }x\in \Omega.
    \end{aligned}
    \right.
\end{equation}
It is well known that for $w=(v,q)\in\Yscr^T_M\times \Qscr^T$,  there exists a unique \emph{weak solution} $y\in W(0,T;V,V')$ to \eqref{A.2:strong} satisfying $y(0)=y_\circ$ in $H$ and
\begin{align}
    \label{A.2:weak}
    \frac{\mathrm d}{\mathrm dt}\,{\langle y(t),\varphi\rangle}_H+\mu_1\hat{a}^1(y(t),\varphi)=\mu_2\hat g^1(w(t),\varphi)\quad\text{for all }\varphi\in V\text{ and f.a.a. }t\in(0,T].
\end{align}
Further,  for $y_\circ \in V$,  this solution is even more regular  and  it belongs to the space $W(0,T; E, H)$  with  
\[E:=\{u \in H^2(\Omega)=H^2(0,  L):  u_x(0)=u_x(L) =0 \}.\]
In this case,   due to the continuous embeddings  $E \hookrightarrow  V \hookrightarrow H $, we have $W(0,T; E, H) \hookrightarrow C([0,T];V)$ and with standard energy estimates  it can be shown that     
\begin{equation}
    \label{A.2:estimate_linear}
    \begin{split}
        \|y\|^2_{C([0,T];V)} + \| y \|^2_{L^2(0,T;H^2(\Omega))} \leq   c_1e^{c_2T} \left( \|y_\circ\|^2_V +          \|f(v,q)\|^2_{L^2(0,T;H)}, \right), 
    \end{split}
\end{equation}
where $c_1$ and $c_2$ depend only on $L$,   $\mu_1$,  and  $\mu_2$.   See,  e.g., \cite[p.~382]{Eva10}).  

 Now,   we define the mapping $\mathcal T: \Yscr_M^T \to W(0,T;E, H)$, where $y = \mathcal T(v)$ as  the solution to \eqref{A.2:weak} for any given  $v \in\Yscr_M^T$.
Next we show that the mapping $\mathcal{T}:\Yscr_M^{T_\circ}\to\Yscr_M^{T_\circ}$ for a suitable $T_\circ\in(0,T]$ is well-defined. Let $w=(v,q)\in\Yscr_M^T\times \Qscr^T$. Since $v\in\Yscr_M^T$ and $q\in \Qscr^T$, it follows that
\begin{align*}
    \mathfrak f(t,x) := \mu_2f(v(t,x),q(t,x))\in L^2(0,T;H)\simeq L^2(Q_T).
\end{align*}
We write  $y=\phi+\psi$, where $\phi$ and $\psi$ are the solution, respectively to
\begin{align}
    \left\{
    \begin{aligned}
       \phi_t(t,x)-\mu_1\left(\kappa_1(x)  \phi_x(t,x)\right)_x&=\mathfrak f(t,x)&&\text{f.a.a. }(t,x)\in Q_T,\\
        \phi_x(t,0)= \phi_x(t,L)&=0&&\text{f.a.a. }t\in(0,T],\\
    \phi(0,x)&= 0&&\text{f.a.a. }x\in \Omega.
    \end{aligned}
    \right.
\end{align}
and
\begin{align}
    \left\{
    \begin{aligned}
       \psi_t(t,x)-\mu_1\left(\kappa_1(x) \psi_x(t,x)\right)_x&=0&&\text{f.a.a. }(t,x)\in Q_T,\\
        \psi_x(t,0)=\psi_x(t,L)&=0&&\text{f.a.a. }t\in(0,T],\\
   \psi(0,x)&= y_\circ &&\text{f.a.a. }x\in \Omega.
    \end{aligned}
    \right.
\end{align}
Then, on the one hand for $\phi$ we can derive the estimate
\begin{align}
    \label{A.2:estimate_phi}
    {\|\phi\|}_{C([0,T];V)}+{\|\phi\|}_{L^2(0,T;H^2(\Omega))}\le c_1e^{c_2T}\,{\|\mathfrak f\|}_{L^2(0,T;H)}
\end{align}
with positive constants $c_1$ and $c_2$ depending only on $\Pad$, $\kappa$ and $\Omega$, while on the other hand using the fact that $y_\circ(x)\in[2/M,M/2]$ for every $x\in\Omega$, and the comparison principle \cite[A.1 Theorem and A.2 Corollary]{S89}, we can conclude that
\begin{equation}
    \label{A.2:estimate_psi}
    \psi(t,x)  \in \left[ \frac{2}{M}, \frac{M}{2} \right]  \quad  \text { f.a.a. } (t,x) \in [0,T]\times \Omega.
\end{equation}
Furthermore, for a given $T>0$, using $y=\psi+\phi$, \eqref{A.2:estimate_phi} and \eqref{A.2:estimate_psi}, we can write f.a.a. $(t,x)\in[0,T]\times\Omega$ that 
\begin{equation}
    \label{A.2:proof_y_in_YM_1}
    \begin{split}
        y(t,x) & \leq \frac{M}{2}+\phi(t,x) \leq \frac{M}{2}+ \hat{c}e^{c_2T}\,{\|\mathfrak f\|}_{L^2(0,T;H)},\\
        y(t,x)&\geq \frac{2}{M}+\phi(t,x)\geq \frac{2}{M}-\hat{c}e^{c_2T}\,{\|\mathfrak f\|}_{L^2(0,T;H)},
    \end{split}
\end{equation}
where $\hat{c}$ depend only on $\Pad$, $\ka$ and $\Omega$.
Since the function $[0,T]\ni t \mapsto \gamma (t) = \hat{c}e^{c_2t}\,\|\mathfrak f\|_{L^2(0,t;H)}$ is positive, increasing in $t$, and satisfies $\gamma(0)=0$, we can find $T_\circ\in(0,T]$ such that
\begin{equation}
    \label{A.2:proof_y_in_YM_2}
    \hat c_1e^{c_2t}\,{\|\mathfrak f\|}_{L^2(0,t;H)}\le\min\left\{\frac{M}{2},\frac{1}{M}\right\}\quad\text{for all }t \in [0,T_\circ].
\end{equation}
Finally,  using \eqref{A.2:proof_y_in_YM_1} and  \eqref{A.2:proof_y_in_YM_2} and applying Remark~\ref{Rem:y0}, we get
\begin{align*}
    y(t,x)&\le y_\circ(x)+\frac{M}{2}\le\frac{M}{2}+ \frac{M}{2}=M&&\text{for all }(t,x)\in[0,T_\circ]\times\Omega,\\
    y(t,x)&\ge y_\circ(x)-\frac{1}{M}\ge\frac{2}{M}-\frac{1}{M}=\frac{1}{M}&&\text{for all }(t,x)\in[0,T_\circ]\times\Omega,
\end{align*}
which implies that $y\in\Yscr_M^{T_\circ}$ holds.

Next, we show that the mapping $\mathcal T$ is continuous. Beforehand, we show that for given $v_1, v_2 \in \Yscr_M^T$, there exists $\gamma\ge 0$ such that
\begin{equation}
\label{A.2:Lip}
    {\| f(v_1,q)-f(v_2,q)\|}_{L^\infty(0,T;H)}\le\gamma\,{\| v_1 - v_2 \|}_{L^\infty(0,T;H)}.
\end{equation}
holds. We observe that f.a.a. $t\in [0,T]$
\begin{equation}
    \label{A.2:Lip2}
    \begin{split}
        {\|  f(v_1(t),q(t))- f(v_2(t),q(t))\|}^2_H &=  \int_\Omega\left|f(v_1(t,x),q(t,x))- f(v_2(t,x), q(t,x))\right|^2\,\mathrm dx\\
        &=\int_\Omega\left| ( \sqrt{v_1(t,x)} - \sqrt{v_2(t,x)} ) \sinh( q(t,x) )\right|^2\,\mathrm dx,
    \end{split}
\end{equation}
From \eqref{2.1:c_embedding} and \eqref{2.2:q_est_C} it follows that
\begin{align*}
    \max_{x\in\overline\Omega}|\sinh(q(t,x))|\le\sinh\left({\|q(t)\|}_{C(\overline\Omega)}\right)\le\sinh\left(c_{\mathsf e}\,{\|q(t)\|}_\Vc\right)\le c_3(M)\quad\text{f.a.a. }t\in[0,T],
\end{align*}
for a constant $c_3(M)>0$ independent of time. Thus, using \eqref{A.2:Lip2}, we can write f.a.a. $t\in[0,T]$ that
\begin{align*}
    {\|  f(v_1(t),q(t))- f(v_2(t),q(t))\|}^2_H &\le  c_3(M)^2\int_\Omega\left|\sqrt{v_1(t,x)} - \sqrt{v_2(t,x)}\right|^2\,\mathrm dx \\
    &=c_3(M)^2\int_\Omega \left|\frac{v_1(t,x) - v_2(t,x)}{\sqrt{v_1(t,x)} + \sqrt{v_2(t,x)}} \right|^2 \,\mathrm dx\\
    & \le \gamma(M) \int_{\Omega} \,|v_1(t,x)-v_2(t,x)|^2 \,\mathrm dx.
\end{align*}
Therefore, \eqref{A.2:Lip} holds for a constant $\gamma(M)>0$ independent of $v_1,v_2$ and $T$.  
 Using the similar estimate as in \eqref{A.2:estimate_linear} and  \eqref{A.2:Lip}, we can write for every $v_1, v_2 \in \Yscr_M^T$ that 
\begin{equation}
 \|  \mathcal T(v_1)-\mathcal T(v_2) \|^2_{C([0,T];V)} \leq c_1e^{c_2T} \left(\|f(v_1,q)-f(v_2,q)\|^2_{L^2(0,T;H)} \right) \leq  T \gamma^2 c_1e^{c_2T} \left(\|v_1-v_2\|^2_{L^{\infty}(0,T;H)} \right).
\end{equation}
Together with  \eqref{2.1:c_embedding} we get
    \begin{align*}
        {\| \mathcal T(v_1)-\mathcal T(v_2)\|}^2_{C([0,T];C(\overline\Omega))}&\le c_{\mathsf e}^2\, \|  \mathcal T(v_1)-\mathcal T(v_2) \|^2_{C([0,T];V)}\le c_{\mathsf e}^2T \gamma^2 c_1e^{c_2T} \left(\|v_1-v_2\|^2_{L^{\infty}(0,T;H)} \right)\\
        &\le\underbrace{\left[ L c_{\mathsf e}^2T \gamma^2 c_1e^{c_2T} \right]}_{=: \varrho(T)}\,{\|v_1-v_2\|}^2_{C(\overline Q_T)}=\varrho(T)\,{\|v_1-v_2\|}^2_{C(\overline Q_T)}.
    \end{align*}
Thus, we are shown that $\mathcal T$ is continuous.

In order to be able to use Schauder's fixed point theorem, it remains only to show that $\mathcal T$ is compact. To show this, we use the fact the solution of \eqref{A.2:strong} belongs to $W(0,T;E,H)$. Due to \cite[Thoerem 5.2]{A00}, this space is compactly embedded in $C([0,T];H^{2\theta}(\Omega))$ for $0<\theta<\frac{1}{2}$. Further, invoking \cite[Proposition 4.3]{T11},   $C([0,T];H^{2\theta}(\Omega))$ is  continuously embedded in $C(\overline Q_T)$ for $\theta > \frac{1}4$. Thus, choosing $\theta \in (\frac{1}{4} , \frac{1}{2})$, we can infer that $W(0,T;E,H)$ is compactly embedded in $C(\overline Q_T)$ and, thus, $\mathcal T$ is compact.

\noindent
\begin{acknowledgement}
\noindent
\textbf{Acknowledgement}.
We would like to thank Tim Keil (University of M\"unster, Germany) for fruitful discussions regarding the adaptive TR method.
\end{acknowledgement}

\bibliographystyle{plain}
\bibliography{Bibliography}

\end{document}